\newtheorem{tm}{tm}[section]
\newtheorem{theorem}[tm]{Theorem}
\newtheorem{corollary}[tm]{Corollary}
\newtheorem{definition}[tm]{Definition}
\newcommand {\R} {\ensuremath{\mathbb{R}}}
\newcommand {\ZZ} {\ensuremath{\mathbb{Z}}}
\newcommand {\N} {\ensuremath{\mathbb{N}}}
\newcommand {\CC} {\ensuremath{\mathbb{C}}}
\newcommand{\chain}[1]{\{#1_n\}_{n\geq0}}
\numberwithin{equation}{section}
\def\be{\begin{equation}}
\def\ee{\end{equation}}
\begin{document}

 \title{Ergodic property of stable-like Markov chains}
 \author{Nikola Sandri\'{c}\\ Department of Mathematics\\
         Faculty of Civil Engineering, University of Zagreb\\
         Fra Andrije Ka\v{c}i\'{c}a-Mio\v{s}i\'{c}a 26, 10000  Zagreb,
         Croatia \\
        Email: nsandric@grad.hr }

 \maketitle
\begin{center}
{
\medskip

} \end{center}

\begin{abstract}
A stable-like Markov chain is a time-homogeneous  Markov chain on
the real line with the transition kernel $p(x,dy)=f_x(y-x)dy$, where
the density functions $f_x(y)$, for large $|y|$, have a power-law
decay with exponent $\alpha(x)+1$, where $\alpha(x)\in(0,2)$. In
this paper, under a certain uniformity condition on the density
functions $f_x(y)$ and additional mild drift conditions, we give
 sufficient conditions for recurrence
 in the case when $0<\liminf_{|x|\longrightarrow\infty}\alpha(x)$,
 sufficient conditions for transience  in the case when
 $\limsup_{|x|\longrightarrow\infty}\alpha(x)<2$ and sufficient
 conditions for ergodicity in the case when
 $0<\inf\{\alpha(x):x\in\R\}$.
 As a special case of these results, we  give a new
proof for the recurrence and transience property of a symmetric
$\alpha$-stable random walk on $\R$ with the index of stability
$\alpha\neq1.$

\end{abstract}
{\small \textbf{AMS 2010 Mathematics Subject Classification:} 60J05,
60G52} \smallskip

\noindent {\small \textbf{Keywords and phrases:} ergodicity,
Foster-Lyapunov drift criteria,    recurrence, stable distribution,
stable-like Markov chain, transience}

%
%
%
%


\section{Introduction}

\quad \ \ Let $(\Omega,\mathcal{F},\mathbb{P})$ be a probability
space and let $\{J_n\}_{n\geq1}$ be a sequence of i.i.d.  random
variables on $(\Omega,\mathcal{F},\mathbb{P})$ taking values in
$\R^{d}$, $d\geq1$. Let us define $X_n:=\sum_{i=1}^{n}J_i$ and
$X_0:=0$. The sequence $\{X_n\}_{n\geq0}$ is called a \emph{random
walk} with jumps $\{J_n\}_{n\geq1}$. The random walk
$\{X_n\}_{n\geq0}$ is said to be \emph{recurrent} if
$$\sum_{n=1}^{\infty}\mathbb{P}\left(|X_n|<a\right)=\infty\quad\textrm{for every}\ a>0 ,$$ and  \emph{transient} if $$\sum_{n=1}^{\infty}\mathbb{P}\left(|X_n|<a\right)<\infty\quad\textrm{for every}\ a>0.$$
It is well known that every random walk is either recurrent or
transient (see   \cite[Theorem 4.2.6]{durrett}). Further, recall
that an $\R^{d}$-valued random variable $S$ is said to have
\emph{stable distribution} if, for any $n\in\N$, there are $a_n
> 0$ and $b_n\in\R^{d}$, such that
$$S_1+\ldots+S_n\stackrel{\hbox{\scriptsize d}}{=}a_nS+b_n,$$ where
$S_1,\ldots, S_n$ are independent copies of $S$ and
$\stackrel{\hbox{\scriptsize d}}{=}$ denotes equality in
distribution.  It turns out that $a_n=n^{\frac{1}{\alpha}}$ for some
$\alpha\in(0,2]$ which is called the index of stability (see
\cite[Definition 1.1.4 and Corollary 2.1.3]{taqqu}).  The case when
$\alpha=2$ corresponds to the Gaussian random variable. A random
walk $\{X_n\}_{n\geq0}$ is said to be \emph{stable} if the random
variable $J_1$ has stable distribution.  In the case $d=1$, every
stable distribution is characterized by four parameters: the
stability parameter $\alpha\in(0,2],$  the skewness parameter
$\beta\in[-1,1]$, the scale parameter $\gamma\in(0,\infty)$ and the
shift parameter $\delta\in\R$ (see \cite[Definition 1.1.6]{taqqu}).
Using the notation from \cite{taqqu}, we denote  one-dimensional
stable distributions by $S_{\alpha}(\beta,\gamma,\delta).$ For
symmetric one-dimensional stable distributions, that is, for
$S_{\alpha}(0,\gamma,0)$ (see \cite[Property 1.2.5]{taqqu}), we
write $S\alpha S$. It is well known that a $S\alpha S$ random walk
is recurrent if and only if $\alpha\geq1$ (see the discussion after
\cite[Lemma 4.2.12]{durrett}). For the occurrence of stable
distributions in applications (for example, biology, economy,
engineering, physics, etc.) we refer the reader to
\cite{feller-book} and \cite{zolotarev-book}. The concept of
stable-like Markov chains, as a natural generalization of
$S_{\alpha}(0,\gamma,\delta)$ random walks,
 has been introduced in \cite{sandric-rectrans} in the way
  that the  stability,  scale and shift parameters  of the jump distribution depend
on the current position of the process. In the same paper, the
author also analyzes structural properties and provides sufficient
conditions for  recurrence and transience of such processes. In this
paper, we generalize the results presented in
\cite{sandric-rectrans} and we also discuss  ergodic property of
such processes.

From now on, let us denote by $\mathcal{B}(\R^{d})$ the Borel
$\sigma$-algebra on $\R^{d}$, $d\geq1$, by $\lambda(\cdot)$ the
Lebesgue measure on $\mathcal{B}(\R^{d})$ and for arbitrary
$B\in\mathcal{B}(\R^{d})$ and $x\in\R^{d}$ we define
$B-x:=\{y-x:y\in B\}$. Furthermore, for $f,g:\R\longrightarrow\R$,
let us introduce the notation $f(y)\sim g(y),$ as $y\longrightarrow
y_0$, for $\lim_{y\longrightarrow y_0}f(y)/g(y)=1,$ where
$y_0\in[-\infty,\infty]$.  Recall that if $f(y)$ is the density
function of  a $S_{\alpha}(0,\gamma,\delta)$ distribution, where
$\alpha\in(0,2)$, $\gamma\in(0,\infty)$ and $\delta\in\R$, then
$$f(y)\sim c_{\alpha}|y|^{-\alpha-1},$$ for
$|y|\longrightarrow\infty,$ where $c_1=\gamma/2$ and
$c_{\alpha}=\gamma\Gamma(\alpha+1)\sin\left(\pi\alpha/2\right)/\pi,$
for $\alpha\neq1$ (see    \cite[Property 1.2.15]{taqqu}).

 Now, we recall the definition of stable-like Markov chains introduced
 in \cite{sandric-rectrans}.
Let $\alpha:\R\longrightarrow(0,2)$  be an arbitrary function and
let   $\{f_x:x\in\R\}$ be a family of density functions on $\R$ and
$c:\R\longrightarrow(0,\infty)$ such that
\begin{description}
  \item [\textbf{(C1)}] $x\longmapsto f_x(y)$ is a Borel measurable function for
all $y\in\R$
  \item [\textbf{(C2)}] $f_x(y)\sim
c(x)|y|^{-\alpha(x)-1},$ for $|y|\longrightarrow\infty,$ for all
$x\in\R$
 \item [\textbf{(C3)}] there exists $k_0>0$ such that
 $$\lim_{|y|\longrightarrow\infty}\sup_{x\in[-k_0,k_0]^{c}}\left|f_x(y)\frac{|y|^{\alpha(x)+1}}{c(x)}-1\right|=0$$
                          \item [\textbf{(C4)}] $\displaystyle\inf_{x\in C}c(x)>0$
                          for every compact set
                          $C\subseteq[-k_0,k_0]^{c}$
                          \item [\textbf{(C5)}]  there exists $l_0>0$ such  that for every compact set $C\subseteq[-l_0,l_0]^{c}$ with $\lambda(C)>0$,
                          we have
$$\inf_{x\in[-k_0,k_0]}\int_{C-x}f_x(y)dy>0.$$
\end{description}
Let us define a Markov chain  $\{X_n\}_{n\geq0}$ on $\R$ by the
following  transition kernel \be \label{eq:1.1}
p(x,dy):=f_x(y-x)dy.\ee The chain jumps from the state $x$ with the
transition density $f_x(y-x)$ with  the power-law decay with
exponent $\alpha(x)+1$, and this jump distribution  depends only on
the current state $x$. Transition densities $\{f_x:x\in\R\}$ are
asymptotically equivalent to the densities of
$S_{\alpha}(0,\gamma,\delta)$ distributions, and such Markov chain
 is called a \emph{stable-like Markov chain}.

Conditions (C1)-(C5) are needed to control  the jumps of
$\{X_n\}_{n\geq0}$. They are crucial in deriving certain structural
properties of stable-like chains such as irreducibility and
aperiodicity and  in identifying the class of ``singletons" for such
chains. These properties are essential in finding sufficient
conditions for recurrence, transience and ergodicity. We refer the
reader to \cite{sandric-rectrans} for more details about conditions
(C1)-(C5).

A concrete application of stable-like processes in geophysics is
given in \cite{geo}. Shortly, paleoclimatic records from ice core
show that the climate of the last glacial period experienced rapid
transitions between two climatic states and the triggering mechanism
for climate changes is random fluctuations of the atmospheric
forcing on the ocean circulation. To describe this stochastic
climate dynamics the Langevin equation $dy=-(dU/dy)dt+dN$ is used.
The variable $y$ represents the climate state associated with the
pole ward heat transport. The  first term on the right hand side
represents the dynamics of the ocean circulation, where $U$ is the
climate potential which describes  the multi state character of the
climate system, and the second term on the right hand side is a
noise term which represents the atmospheric forcing on the climate
state (wind stress, heating and water transport). From the data from
the Greenland Ice Core Project, in \cite{geo} has been deduced that
this noise has an $\alpha$-stable component which
 depends  on the position (climate state).
This suggests that the  noise should be modeled by a stable-like
process.

The aim of this paper is to investigate a long-time behavior of
stable-like chains, that is, as in the random walk case, to find
conditions for recurrence, transience and ergodicity  of stable-like
chains in terms of the functions $\alpha(x)$ and $c(x)$. To the best
of our knowledge, all methods used in establishing conditions for
recurrence and transience in the random walk case are based on the
i.i.d. property of random walk jumps, that is, laws of large numbers
(Chung-Fuchs theorem), central limit theorems, characteristic
functions approach (Stone-Ornstein formula), etc. (see
\cite[Theorems 4.2.7, 4.2.8 and 4.2.9]{durrett}). Although we deal
with distributions similar to $S_{\alpha}(\beta,\gamma,\delta)$
distributions, it is not clear if these methods can be used in the
case of the non-constant functions $\alpha(x)$ and $c(x)$. Special
cases of this problem have been considered in
\cite{rogozin-foss-oscrw}, \cite{sandric-periodic} and
  \cite{sandric-rectrans}. In \cite{rogozin-foss-oscrw}  the authors consider the countable
state space $\ZZ$ and the function $\alpha(x)$ is a two-valued step
function which takes one value on negative integers and the other
one on nonnegative integers. In \cite{sandric-periodic}, the author
considers two
 special cases of stable-like chains, as in \cite{rogozin-foss-oscrw}, the case when the
 function $\alpha(x)$ is of the form $$\alpha(x)=\left\{\begin{array}{cc}
                                                      \alpha, & x<0 \\
                                                      \beta,&
                                                      x\geq0,
                                                    \end{array}\right.$$
                                                    and the case
                                                    when $\alpha(x)$
                                                    is periodic
                                                    function. In the
                                                    first case, it
                                                    has been proved
                                                    that the
                                                    corresponding
                                                    stable-like
                                                    chain is
                                                    recurrent if and
                                                    only if
                                                    $\alpha+\beta\geq2$,
                                                    while in the
                                                    second case, under the assumption $\lambda(\{x:\alpha(x)=\alpha_0:=\inf \{\alpha(y):y\in\R\}\})>0$, it has been proved that the
                                                    corresponding
                                                    stable-like
                                                    chain is
                                                    recurrent if and
                                                    only if
                                                    $\alpha_0\geq1.$
In \cite{sandric-rectrans}, it has been proved that
$\liminf_{|x|\longrightarrow\infty}\alpha(x)>1$, under an additional
mild drift condition,  implies recurrence of the corresponding
stable-like chain, and
 $\limsup_{|x|\longrightarrow\infty}\alpha(x)<1$, under an additional mild drift
 condition,
 implies the transience of the corresponding stable-like chain. In this
 paper, we generalize these results. More precisely,   we give
 sufficient conditions for  recurrence
 in the case when $0<\liminf_{|x|\longrightarrow\infty}\alpha(x)$
 and
 sufficient conditions for transience  in the case when
 $\limsup_{|x|\longrightarrow\infty}\alpha(x)<2$. Further, we also discuss the ergodic property of stable-like chains and give   sufficient
 conditions for ergodicity in the case when
 $0<\inf\{\alpha(x):x\in\R\}$.
Let us also mention that if we allow $\alpha(x)\in(0,\infty)$, the
recurrence property of the corresponding stable-like chain in the
case when $\liminf_{|x|\longrightarrow\infty}\alpha(x)>2$ has been
covered in \cite{rus}. For the continuous-time version of
stable-like chains and their recurrence and transience property we
refer the reader to \cite{bjoern-overshoot}, \cite{franke-periodic,
franke-periodicerata}, \cite{rene-wang-feller}, \cite{sandric-spa}
and \cite{sandric-tams}.

Before stating the main results of this paper, we recall relevant
definitions of recurrence, transience and ergodicity.
\begin{definition} Let $\chain{Y}$ be a Markov
chain on $(\R^{d},\mathcal{B}(\R^{d}))$. The chain $\chain{Y}$ is
called
\begin{enumerate}
                      \item [(i)] \emph{Lebesgue irreducible} if
                      $\lambda(B)>0$ implies
                      $\sum_{n=1}^{\infty}\mathbb{P}(Y_n\in B|Y_0=x)>0$ for all
                      $x\in\R^{d}$.
                      \item [(ii)] \emph{Recurrent} if it is Lebesgue
                      irreducible and if $\lambda(B)>0$ implies $\sum_{n=1}^{\infty}\mathbb{P}(Y_n\in B|Y_0=x)=\infty$ for all
                      $x\in\R^{d}$.
                      \item[(iii)] \emph{Harris recurrent} if it is Lebesgue
                      irreducible and if $\lambda(B)>0$ implies $\mathbb{P}^{x}(\tau_B<\infty)=1$ for all
                      $x\in\R^{d}$, where $\tau_B:=\min\{n\in\N:Y_n\in
                      B\}.$
\item [(iv)] \emph{Transient} if it is Lebesgue
                      irreducible and if there exists a countable
                      covering of $\R^{d}$ with  sets
$\{B_j\}_{j\in\N}\subseteq\mathcal{B}(\R^{d})$, such that for each
$j\in\N$ there is a finite constant $M_j\geq0$ such that
$\sum_{n=1}^{\infty}\mathbb{P}(Y_n\in B_j|Y_0=x)\leq M_j$ holds for
all $x\in\R^{d}$.
 \end{enumerate}
 \end{definition}
Note that the Lebesgue irreducibility of
$S_{\alpha}(0,\gamma,\delta)$ random walks  is trivially satisfied,
and  the Lebesgue irreducibility of
 general stable-like chains has been shown in \cite[Proposition 2.1]{sandric-rectrans}. Hence, according to \cite[Theorem
8.3.4]{meyn-tweedie-book}, every stable-like chain is either
recurrent or transient. Further, clearly, every Harris recurrent
chain is recurrent but in general, these two properties are not
equivalent. They differ on a set of Lebesgue measure zero (see
\cite[Theorem 9.1.5]{meyn-tweedie-book}). In the case of stable-like
chains, these two properties are equivalent (see \cite[Proposition
5.3]{sandric-rectrans}). For further structural properties of
stable-like chains we refer the reader to \cite{sandric-rectrans}. A
$\sigma$-finite measure $\pi(\cdot)$ on $\mathcal{B}(\R^{d})$ is
called an \emph{invariant measure} for a Markov chain $\chain{Y}$ on
$(\R^{d},\mathcal{B}(\R^{d}))$ if
$$\pi(B)=\int_{\R}\mathbb{P}(Y_{1}\in B|Y_0=x)\pi(dx)$$ holds for all  $B\in\mathcal{B}(\R^{d}).$
\begin{definition}
 A Markov chain $\chain{Y}$ on $(\R^{d},\mathcal{B}(\R^{d}))$ is called \emph{ergodic} if an
invariant probability measure $\pi(\cdot)$ exists and if
$$\lim_{n\longrightarrow\infty}||\mathbb{P}(Y_n\in\cdot|Y_0=x)-\pi(\cdot)||=0$$ holds for
all $x\in\R^{d},$ where $||\cdot||$ denotes the total variation norm
on the space of signed measures.
\end{definition}
Now, let us state the main results of this paper. The following
three constants will appear in the statements of the main results.
For $\alpha\in(0,2)$ let
$$R_1(\alpha):=\frac{-\pi\mathrm{ctg}\left(\frac{\pi
\alpha}{\rm{2}}\right)}{\alpha},$$ for $\alpha\in(0,2)$ and
$\beta\in(0,1]\cap(0,\alpha)$  let
$$R_2(\alpha,\beta):=-\sum_{n=1}^{\infty}{\beta \choose
2n}\frac{2}{2n-\alpha}+\frac{2}{\alpha}-\frac{_2F_1(-\beta,\alpha-\beta,1+\alpha-\beta;-1)+\
_2F_1(-\beta,\alpha-\beta,1+\alpha-\beta;1)}{\alpha-\beta}$$ and for
$\alpha\in(0,2)$ and $\beta\in(0,1)$ let
$$T(\alpha,\beta):=\sum_{n=1}^{\infty}{-\beta \choose
2n}\frac{2}{2n-\alpha}-\frac{2}{\alpha}+\frac{_2F_1\left(\beta,\alpha+\beta,1+\alpha+\beta;1\right)+\,
_2F_1\left(\beta,\alpha+\beta,1+\alpha+\beta;-1\right)}{\alpha+\beta},$$
where  ${z \choose n}$ is the binomial coefficient and
$_2F_1(a,b,c;z)$ is the Gauss hypergeometric function  (see Section
3 for the definition of this function). Clearly, the constant
$R_1(\alpha)$, as a function of $\alpha\in(0,2)$, is strictly
increasing and it satisfies
$\lim_{\alpha\longrightarrow0}R_1(\alpha)=-\infty$, $R_1(1)=0$ and
$\lim_{\alpha\longrightarrow2}R_1(\alpha)=\infty$. The constant
$R_2(\alpha,\beta)$, as a function of $\alpha\in(\beta,2)$ for fixed
$\beta\in(0,1)$, is strictly increasing and it satisfies
$\lim_{\alpha\longrightarrow\beta}R_2(\alpha,\beta)=-\infty$,
$R_2(1+\beta,\beta)=0$ and
$\lim_{\alpha\longrightarrow2}R_2(\alpha,\beta)=\infty$ (see the
proof of Theorem \ref{tm1.1}). Finally, the constant
$T(\alpha,\beta)$, as a function of $\alpha\in(0,2)$ for fixed
$\beta\in(0,1)$, is strictly increasing and it satisfies
$\lim_{\alpha\longrightarrow0}T(\alpha,\beta)=-\infty$,
$T(1-\beta,\beta)=0$ and
$\lim_{\alpha\longrightarrow2}T(\alpha,\beta)=\infty$ (see the proof
of Theorem \ref{tm1.2}).
\begin{theorem}\label{tm1.1}
Let $\alpha:\R\longrightarrow(0,2)$ be an arbitrary function such
that
$$0<\alpha:=\displaystyle\liminf_{|x|\longrightarrow\infty}\alpha(x)$$
and let $\beta\in(0,1]\cap(0,\alpha)$ be arbitrary. Furthermore, let
$\{f_x:x\in\R\}$ be a family of density functions on $\R$ and
$c:\R\longrightarrow(0,\infty)$  which satisfy conditions (C1)-(C5)
and
 such that
\begin{align}\label{eq:1.2}\limsup_{\delta\longrightarrow0}\limsup_{|x|\longrightarrow\infty}\frac{|x|^{\alpha(x)}}{c(x)}\int_{-\delta|x|}^{\delta|x|}\log\left(1+\mathrm
{sgn}\it\, (x)\frac{y}{\rm{1}+\it{|x|}}\right)f_{x}(y)dy<R_1(\alpha)
\end{align}
or
\begin{align}\label{eq:1.3}\limsup_{\delta\longrightarrow0}\limsup_{|x|\longrightarrow\infty}\frac{|x|^{\alpha(x)}}{c(x)}\int_{-\delta|x|}^{\delta|x|}\left(\left(1+\mathrm
{sgn}\it\,
(x)\frac{y}{\it{|x|}}\right)^{\beta}-1\right)f_{x}(y)dy<R_2(\alpha,\beta).
\end{align}
 Then the stable-like Markov chain $\{X_n\}_{n\geq0}$     is
recurrent. \end{theorem}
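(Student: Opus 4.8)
The plan is to apply the Foster--Lyapunov drift criterion for recurrence \cite[Theorem 9.4.1]{meyn-tweedie-book}: since every stable-like chain is Lebesgue (hence $\psi$-) irreducible and all compact sets are petite for it (see \cite{sandric-rectrans}), it suffices to exhibit a norm-like test function $V\geq0$ (that is, $V(x)\to\infty$ as $|x|\to\infty$) and a compact set $C\subseteq\R$ with $PV(x)-V(x)\leq0$ for all $x\in C^{c}$, where $PV(x)=\int_{\R}V(y)\,p(x,dy)=\int_{\R}V(x+y)f_{x}(y)\,dy$. Under hypothesis \eqref{eq:1.2} I would take $V(x)=\log(1+|x|)$, and under \eqref{eq:1.3} I would take $V(x)=|x|^{\beta}$. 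Note that the restriction $\beta<\alpha$ is exactly what guarantees $PV(x)<\infty$, since then $V$ grows slowly enough to be integrated against the $|y|^{-\alpha(x)-1}$ tails of $f_{x}$.

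The heart of the matter is to estimate the drift $\Delta V(x):=PV(x)-V(x)=\int_{\R}\bigl(V(x+y)-V(x)\bigr)f_{x}(y)\,dy$ for large $|x|$, and for this I would split the integral at the jump scale $\delta|x|$ into a small-jump part over $\{|y|<\delta|x|\}$ and a large-jump part over $\{|y|\geq\delta|x|\}$. For the small jumps one has, once $\delta<1$ so that $x+y$ has the sign of $x$, the exact identities $V(x+y)-V(x)=\log\bigl(1+\mathrm{sgn}(x)\tfrac{y}{1+|x|}\bigr)$ when $V=\log(1+|x|)$, and $V(x+y)-V(x)=|x|^{\beta}\bigl((1+\mathrm{sgn}(x)\tfrac{y}{|x|})^{\beta}-1\bigr)$ when $V=|x|^{\beta}$. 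Consequently, after multiplying $\Delta V(x)$ by the normalising factor $|x|^{\alpha(x)}/c(x)$ (respectively $|x|^{\alpha(x)-\beta}/c(x)$), the small-jump contribution is precisely the integral appearing in \eqref{eq:1.2} (respectively \eqref{eq:1.3}), whose $\limsup_{\delta\to0}\limsup_{|x|\to\infty}$ is bounded above by $R_1(\alpha)$ (respectively $R_2(\alpha,\beta)$) by hypothesis.

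For the large jumps I would use the uniform tail asymptotics (C2)--(C3): since $|y|\geq\delta|x|\to\infty$, one may replace $f_{x}(y)$ by $c(x)|y|^{-\alpha(x)-1}(1+o(1))$ uniformly, substitute $y=|x|u$, and pass to the limit to obtain that the normalised large-jump contribution is asymptotically $\int_{\R}\log|1+u|\,|u|^{-\alpha(x)-1}\,du$ (respectively $\int_{\R}(|1+u|^{\beta}-1)\,|u|^{-\alpha(x)-1}\,du$), with the understanding that near $u=0$ these integrands are only conditionally integrable, the singularity being tamed by the cancellation of the linear terms from $u$ and $-u$ (this cancellation is exactly why the splitting at $\delta|x|$ is needed). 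The main computational step is then to evaluate these integrals in closed form and show they equal $-R_1(\alpha(x))$ and $-R_2(\alpha(x),\beta)$; the cotangent expression for $R_1$ and the binomial/hypergeometric expression for $R_2$ arise precisely here. Combining the two parts via $\limsup(a+b)\leq\limsup a+\limsup b$ would then give $\limsup_{|x|\to\infty}\tfrac{|x|^{\alpha(x)}}{c(x)}\Delta V(x)<R_1(\alpha)-R_1(\alpha)=0$ (and analogously under \eqref{eq:1.3}), so that $\Delta V(x)<0$ for all sufficiently large $|x|$, which is the required drift inequality.

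The main obstacle is twofold. First, since $\alpha=\liminf_{|x|\to\infty}\alpha(x)$ only, the exponent $\alpha(x)$ in the large-jump integral need not converge, so $-R_1(\alpha(x))$ cannot simply be replaced by $-R_1(\alpha)$; here I would exploit the continuity and strict monotonicity in $\alpha$ of $R_1$ and $R_2$ (the latter monotonicity, together with the boundary values $\lim_{\alpha\to\beta}R_2(\alpha,\beta)=-\infty$, $R_2(1+\beta,\beta)=0$ and $\lim_{\alpha\to2}R_2(\alpha,\beta)=\infty$, must itself be established in this proof) to bound the $\limsup$ of the large-jump term by $-R_1(\alpha-\varepsilon)$ and then let $\varepsilon\to0$, the strictness of \eqref{eq:1.2}--\eqref{eq:1.3} supplying the necessary slack. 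Second, one must rigorously justify the interchange of the limits in $\delta$ and $|x|$ and control the $o(1)$ error coming from (C3) uniformly over the large-jump region; in practice the closed-form evaluation of the defining integrals of $R_1(\alpha)$ and $R_2(\alpha,\beta)$ will be the heaviest analytic computation of the argument.
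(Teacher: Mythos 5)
Your proposal is correct and follows essentially the same route as the paper: the Foster--Lyapunov recurrence criterion applied to $V(x)=\log(1+|x|)$ under \eqref{eq:1.2} and $V(x)=|x|^{\beta}$ under \eqref{eq:1.3}, with the drift split at $|y|=\delta|x|$ so that the small-jump part is exactly the hypothesis integral, while the large-jump part, after replacing $f_x(y)$ by $c(x)|y|^{-\alpha(x)-1}$ via (C3), is evaluated in closed form as $-R_1(\alpha(x))$ resp.\ $-R_2(\alpha(x),\beta)$ and then controlled through monotonicity and continuity in $\alpha$, exactly as you describe. The only organizational differences are that the paper disposes of the logarithmic case by invoking the proof from \cite{sandric-rectrans} (showing the constant there equals $R_1(\alpha)$ via digamma identities), and carries out your deferred closed-form evaluation by further splitting the large jumps at $|y|=x$ into a binomial-series piece and a hypergeometric (Euler-integral) piece.
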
 \begin{theorem}\label{tm1.2} Let
$\alpha:\R\longrightarrow(0,2)$ be an arbitrary function such that
$$\displaystyle\limsup_{|x|\longrightarrow\infty}\alpha(x)=:\alpha<2$$
and let $\beta\in(0,1)$ be arbitrary.
 Furthermore,  let
$\{f_x:x\in\R\}$ be a family of density functions on $\R$ and
$c:\R\longrightarrow(0,\infty)$  which satisfy conditions (C1)-(C5)
and
 such that
\be\label{eq:1.4}\liminf_{\delta\longrightarrow0}\liminf_{|x|\longrightarrow\infty}\frac{|x|^{\alpha(x)}}{c(x)}\int_{-\delta|x|}^{\delta|x|}\left(1-\left(1+\mathrm
{sgn}\it\,
(x)\frac{y}{\rm{1}+|\it{x}|}\right)^{-\beta}\right)f_{x}(y)dy>T(\alpha,\beta).\ee
 Then the stable-like Markov chain
$\{X_n\}_{n\geq0}$    is transient.
\end{theorem}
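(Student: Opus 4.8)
The plan is to verify a Foster--Lyapunov drift criterion for transience with the bounded test function $V(x):=(1+|x|)^{-\beta}$, which is positive, decreasing in $|x|$ and tends to $0$ at infinity. Since $\{X_n\}_{n\ge0}$ is Lebesgue irreducible by \cite[Proposition~2.1]{sandric-rectrans} (hence either recurrent or transient), it is enough to produce a compact interval $C=[-R,R]$ such that $PV(x)\le V(x)$ for all $x\in C^{c}$. Indeed, this makes $V(X_{n\wedge\tau_{C}})$ a bounded nonnegative supermartingale, so for $|x_{0}|>R$ one obtains $\inf_{C}V\cdot\mathbb{P}^{x_{0}}(\tau_{C}<\infty)\le V(x_{0})$, i.e.\ $\mathbb{P}^{x_{0}}(\tau_{C}<\infty)\le\bigl((1+R)/(1+|x_{0}|)\bigr)^{\beta}<1$; together with irreducibility this gives transience (cf.\ the transience criterion of \cite[Theorem~8.4.2]{meyn-tweedie-book}). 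Thus everything reduces to the drift inequality $PV(x)\le V(x)$ for all sufficiently large $|x|$.

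To establish it I would write
\[
\frac{V(x)-PV(x)}{V(x)}=\int_{\R}\Bigl(1-\tfrac{V(x+y)}{V(x)}\Bigr)f_{x}(y)\,dy
\]
and split the integral at $|y|=\delta|x|$ for a small fixed $\delta\in(0,1)$. On $|y|\le\delta|x|$ the point $x+y$ keeps the sign of $x$, whence $V(x+y)/V(x)=\bigl(1+\mathrm{sgn}(x)\tfrac{y}{1+|x|}\bigr)^{-\beta}$ exactly, and this inner part is precisely the integral in \eqref{eq:1.4}. On $|y|>\delta|x|$ I would substitute $y=|x|u$ and invoke the uniform tail control (C3), $f_{x}(|x|u)\sim c(x)(|x||u|)^{-\alpha(x)-1}$, together with $V(x+|x|u)/V(x)\to|1+\mathrm{sgn}(x)u|^{-\beta}$; after multiplying through by $|x|^{\alpha(x)}/c(x)$ this yields
\[
\frac{|x|^{\alpha(x)}}{c(x)}\,\frac{V(x)-PV(x)}{V(x)}
=\frac{|x|^{\alpha(x)}}{c(x)}\int_{-\delta|x|}^{\delta|x|}\Bigl(1-\bigl(1+\mathrm{sgn}(x)\tfrac{y}{1+|x|}\bigr)^{-\beta}\Bigr)f_{x}(y)\,dy
-\int_{|u|>\delta}\bigl(|1+\mathrm{sgn}(x)u|^{-\beta}-1\bigr)|u|^{-\alpha(x)-1}\,du+o(1)
\]
as $|x|\to\infty$, for each fixed $\delta$.

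The crux is a calculus lemma evaluating the outer integral as $\delta\to0$. The singularities of $|1+u|^{-\beta}$ and $|1-u|^{-\beta}$ at $u=0$ cancel in their symmetric sum, which starts at order $u^{2}$ (namely $2\sum_{n\ge1}\binom{-\beta}{2n}u^{2n}$), so the limit exists for every $\alpha\in(0,2)$ and equals $T(\alpha(x),\beta)$; the series term, the $-2/\alpha$ term and the two ${}_2F_1$ evaluations in the definition of $T$ come respectively from the Taylor part near $u=0$, the constant $-2$, and the remaining integrals over $(0,\infty)$, and the same computation gives the monotonicity and boundary values of $T(\cdot,\beta)$ asserted before the theorem. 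I expect this explicit evaluation, together with the uniform control of the error near the integrable singularity at $x+y=0$, to be the main obstacle. Granting it, one takes $\liminf_{|x|\to\infty}$ for fixed $\delta$ (using superadditivity of $\liminf$): the inner term is bounded below through \eqref{eq:1.4}, and letting $\delta\to0$ the outer term tends to $-T(\alpha(x),\beta)$, whose relevant limit is $-T(\alpha,\beta)$ because $T(\cdot,\beta)$ is increasing and continuous while $\limsup_{|x|\to\infty}\alpha(x)=\alpha$. Hence $\liminf_{|x|\to\infty}\tfrac{|x|^{\alpha(x)}}{c(x)}\tfrac{V(x)-PV(x)}{V(x)}>0$, so $PV(x)<V(x)$ for all large $|x|$, which completes the verification of the drift criterion and proves transience.
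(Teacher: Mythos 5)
Your proposal is correct and takes essentially the same route as the paper: your test function $(1+|x|)^{-\beta}$ is just the complement of the paper's $V(x)=1-(1+|x|)^{-\beta}$ (so the two drift inequalities coincide), and your supermartingale argument is precisely the content of the Meyn--Tweedie transience criterion that the paper invokes. Your key calculus lemma, $\lim_{\delta\to0}\int_{|u|>\delta}\bigl(|1+u|^{-\beta}-1\bigr)|u|^{-\alpha-1}\,du=T(\alpha,\beta)$, is exactly what the paper establishes in Step 4 via the terms $U_1^{\delta,\varepsilon},\dots,U_5^{\varepsilon}$ (binomial series near $u=0$, the ${}_2F_1$ integral representation for the far part, and monotonicity of $T(\cdot,\beta)$ to pass from $\alpha(x)$ to $\alpha$), so the two proofs differ only in bookkeeping.
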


\begin{theorem}\label{tm1.3} Let $\alpha:\R\longrightarrow(0,2)$ be an arbitrary function such
that
$$0<\inf\{\alpha(x):x\in\R\},$$ let
$\alpha:=\liminf_{|x|\longrightarrow\infty}\alpha(x)$ and let
$\beta\in(0,1]\cap(0,\inf\{\alpha(x):x\in\R\})$ be arbitrary.
Furthermore, let $\{f_x:x\in\R\}$ be a family of density functions
on $\R$ and $c:\R\longrightarrow(0,\infty)$  which satisfy
conditions (C1)-(C5) and
 such that
\begin{align}\label{eq:1.5}\limsup_{d\longrightarrow0}\limsup_{\delta\longrightarrow0}\limsup_{|x|\longrightarrow\infty}\frac{|x|^{\alpha(x)}}{c(x)}\left(\int_{-\delta|x|}^{\delta|x|}\log\left(1+\mathrm
{sgn}\it\,
(x)\frac{y}{\rm{1}+\it{|x|}}\right)f_{x}(y)dy+d\right)<R_1(\alpha)
\end{align}
or
\begin{align}\label{eq:1.6}\limsup_{d\longrightarrow0}\limsup_{\delta\longrightarrow0}\limsup_{|x|\longrightarrow\infty}\frac{|x|^{\alpha(x)}}{c(x)}\left(\int_{-\delta|x|}^{\delta|x|}\left(\left(1+\mathrm
{sgn}\it\,
(x)\frac{y}{\it{|x|}}\right)^{\beta}-1\right)f_{x}(y)dy+d|x|^{-\beta}\right)<R_2(\alpha,\beta).
\end{align}
Then the stable-like Markov chain $\{X_n\}_{n\geq0}$     is ergodic.
\end{theorem}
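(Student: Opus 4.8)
The plan is to prove ergodicity by verifying the Foster–Lyapunov drift condition for positive Harris recurrence. The key principle from Markov chain theory (specifically the criterion in \cite[Chapter 13]{meyn-tweedie-book}) is that ergodicity follows once we exhibit a norm-like test function $V:\R\longrightarrow[0,\infty)$, a small (petite) set $C$, and constants $b<\infty$ and $\varepsilon>0$ such that the drift inequality
\begin{align}\label{eq:driftplan}
\int_{\R}V(y)\,p(x,dy)-V(x)\leq-\varepsilon+b\,\mathbf{1}_{C}(x)
\end{align}
holds for all $x\in\R$. Since the smallness/petiteness of suitable compact sets for stable-like chains is already established through the irreducibility and aperiodicity results from \cite{sandric-rectrans}, the entire burden falls on constructing $V$ and estimating its one-step expected increment.

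The natural candidates for $V$ are exactly the test functions used to prove recurrence in Theorem \ref{tm1.1}, namely $V(x)=\log(1+|x|)$ when working under hypothesis \eqref{eq:1.5}, and $V(x)=(1+|x|)^{\beta}$ when working under \eqref{eq:1.6}. First I would compute $\int_{\R}V(y)p(x,dy)-V(x)=\int_{\R}(V(x+y)-V(x))f_x(y)\,dy$ and split the integral into the central region $|y|\leq\delta|x|$ and the tails $|y|>\delta|x|$. In the central region, a Taylor-type expansion of $V(x+y)-V(x)$ produces the logarithmic (respectively power) integrand appearing in \eqref{eq:1.5} and \eqref{eq:1.6}, and the uniformity condition (C3) lets me replace $f_x(y)$ by its stable asymptotic profile $c(x)|y|^{-\alpha(x)-1}$ up to a negligible error as $|x|\longrightarrow\infty$. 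The constant $R_1(\alpha)$ (respectively $R_2(\alpha,\beta)$) arises precisely as the value of the corresponding principal-value integral against the stable density over the tail region $|y|>\delta|x|$, which is why the thresholds in \eqref{eq:1.5} and \eqref{eq:1.6} match those in the recurrence theorem. The extra parameter $d$ and the additive terms $d$ and $d|x|^{-\beta}$ serve to buy the strict negativity $-\varepsilon$ needed for ergodicity rather than mere recurrence: taking $d>0$ fixed strengthens the drift to be uniformly negative outside a compact set, and then letting $d\longrightarrow0$ recovers the sharp constant.

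The precise chain of limits is what makes the argument work: for fixed $d>0$, the inner double limit in $\delta$ and $|x|$ being strictly below $R_1(\alpha)$ (respectively $R_2(\alpha,\beta)$) guarantees that for all sufficiently large $|x|$ the quantity $\frac{|x|^{\alpha(x)}}{c(x)}\bigl(\int_{-\delta|x|}^{\delta|x|}(V\text{-increment})f_x\,dy+d\bigr)$ stays below a constant strictly less than the tail contribution, so that after multiplying through by $c(x)|x|^{-\alpha(x)}$ the full expected increment is bounded above by a strictly negative multiple of $c(x)|x|^{-\alpha(x)}$ outside a compact set. This yields \eqref{eq:driftplan} with that compact set as $C$, and positive Harris recurrence together with aperiodicity delivers ergodicity in the total-variation sense of the ergodicity definition. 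I would close by noting that the invariant probability measure exists automatically once the drift condition with norm-like $V$ is verified, since \eqref{eq:driftplan} forces positive recurrence.

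The main obstacle I anticipate is controlling the central-region expansion uniformly in $x$ while simultaneously managing the auxiliary $d$-term so that the triple limit collapses correctly. Concretely, one must show that the error from replacing $V(x+y)-V(x)$ by its leading logarithmic or power approximation on $|y|\leq\delta|x|$ vanishes after the normalization by $|x|^{\alpha(x)}/c(x)$, and that the tail integral over $|y|>\delta|x|$ converges to the hypergeometric constant $R_1(\alpha)$ or $R_2(\alpha,\beta)$ uniformly enough that the strict inequality survives passage to the limit. The delicate point is that $\alpha(x)$ need not converge, only $\liminf_{|x|\to\infty}\alpha(x)=\alpha$; handling the resulting nonuniformity in the exponent, and checking that the monotonicity of $R_1$ and $R_2$ in $\alpha$ makes the liminf the worst case, is where the careful estimates from the proofs of Theorems \ref{tm1.1} and \ref{tm1.2} must be reused and combined with the strict negativity margin supplied by $d$.
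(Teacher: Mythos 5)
Your overall strategy is the same as the paper's: a Foster--Lyapunov drift argument with the test functions $\log(1+|x|)$ and $|x|^{\beta}$ (the paper takes $|x|^{\beta}$ rather than $(1+|x|)^{\beta}$, which is immaterial), the central region $|y|\leq\delta|x|$ handled directly by hypotheses \eqref{eq:1.5} or \eqref{eq:1.6}, the outer regions evaluated via (C3) against the profile $c(x)|y|^{-\alpha(x)-1}$ to produce the constants $R_1(\alpha)$ and $R_2(\alpha,\beta)$, and the parameter $d$ supplying strict negativity. However, two steps are genuinely defective as written. First, your concluding inference fails: you claim the full expected increment is bounded above by a strictly negative multiple of $c(x)|x|^{-\alpha(x)}$ outside a compact set and that this yields the uniform drift $\Delta V(x)\leq-\varepsilon+b\,1_{C}(x)$. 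It does not: $c(x)$ is bounded (see \cite{sandric-rectrans}) and $|x|^{-\alpha(x)}\leq|x|^{-\inf\alpha}\longrightarrow0$, so a bound of the form $-\kappa\, c(x)|x|^{-\alpha(x)}$ vanishes at infinity and provides no uniform margin $-\varepsilon$. The uniform margin must come from the $d$ itself, exactly as in the paper: for a fixed admissible $d>0$, the hypothesis gives $\frac{|x|^{\alpha(x)}}{c(x)}\left(\Delta V(x)+d\right)<0$ for all $|x|$ large, hence $\Delta V(x)\leq-d$ outside a compact set. You stated earlier that $d$ ``buys'' uniform negativity, but in the final step you discard it and rely on the vanishing factor, which is a non sequitur; the repair is to keep the $-d$ on the left-hand side.

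Second, your drift inequality must hold for \emph{all} $x\in\R$ with a finite constant $b$, so you must verify that $\Delta V$ is bounded on the compact set $C$ itself; your proposal never does this. It is not automatic: for $V(x)=|x|^{\beta}$ one needs $\int_{\R}|y|^{\beta}f_x(y)dy<\infty$ uniformly over $x$ in bounded sets, and since $f_x$ has tail index $\alpha(x)$ this is precisely where the hypotheses $0<\inf\{\alpha(x):x\in\R\}$ and $\beta<\inf\{\alpha(x):x\in\R\}$ --- infimum over all of $\R$, not merely $\liminf_{|x|\longrightarrow\infty}\alpha(x)$ as in Theorem \ref{tm1.1} --- enter the proof. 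The paper devotes an explicit step to this point, showing $\sup\left\{\left|\int_{\R}p(x,dy)V(y)-V(x)\right|:x\in C\right\}<\infty$ for every bounded set $C$; without it the constant $b$ may be infinite and the ergodicity criterion does not apply. A smaller misstatement: (C3) is a large-$|y|$ uniformity condition and is used only in the outer regions $|y|>\delta|x|$; in the central region no replacement of $f_x$ by the stable profile is possible or needed, since there the increment of $V$ is exactly the integrand appearing in \eqref{eq:1.5}/\eqref{eq:1.6}, which is why those integrals are taken as hypotheses rather than computed.
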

Conditions (\ref{eq:1.2}), (\ref{eq:1.3}),  (\ref{eq:1.4}),
(\ref{eq:1.5}) and (\ref{eq:1.6}) are needed to control small jumps
of  $\{X_n\}_{n\geq0}$. Big jumps are controlled with condition
(C3).  If we assume that
$$\limsup_{|x|\longrightarrow\infty}\alpha(x)<2\quad\textrm{
and}\quad
\lim_{|x|\longrightarrow\infty}c(x)|x|^{2-\alpha(x)}=\infty,$$ then
conditions (\ref{eq:1.2}) and (\ref{eq:1.3}) are equivalent to
\begin{align}\label{eq:1}\limsup_{\delta\longrightarrow0}\limsup_{|x|\longrightarrow\infty}
\mathrm
{sgn}(\it{x})\frac{|\it{x}|^{\alpha(\it{x})-\rm{1}}}{c(\it{x})}\int_{-\delta|\it{x}|}^{\delta|\it{x}|}yf_{x}(y)dy<R_{\rm{1}}(\alpha),\end{align}
condition (\ref{eq:1.4}) is equivalent to
\begin{align}\label{eq:2}\liminf_{\delta\longrightarrow0}\liminf_{|x|\longrightarrow\infty}
\mathrm
{sgn}(\it{x})\frac{|\it{x}|^{\alpha(\it{x})-\rm{1}}}{c(\it{x})}\int_{-\delta|\it{x}|}^{\delta|\it{x}|}yf_{x}(y)dy>R_{\rm{1}}(\alpha)\end{align}
and conditions (\ref{eq:1.5}) and (\ref{eq:1.6}) are equivalent to
\begin{align}\label{eq:3}\limsup_{d\longrightarrow0}\limsup_{\delta\longrightarrow0}\limsup_{|x|\longrightarrow\infty}
\frac{|\it{x}|^{\alpha(\it{x})-\rm{1}}}{c(\it{x})}\left(\mathrm
{sgn}(\it{x})\int_{-\delta|\it{x}|}^{\delta|\it{x}|}yf_{x}(y)dy+d|\it{x}|\right)<R_{\rm{1}}(\alpha)\end{align}
and
\begin{align}\label{eq:4}\limsup_{\beta\longrightarrow0}\limsup_{d\longrightarrow0}\limsup_{\delta\longrightarrow0}\limsup_{|x|\longrightarrow\infty}\frac{|x|^{\alpha(x)-\rm{1}}}{c(x)}\left(\mathrm
{sgn}(\it{x})\int_{-\delta|x|}^{\delta|x|}yf_{x}(y)dy+\frac{d|x|^{-\beta+\rm{1}}}{\beta}\right)<R_{\rm{1}}(\alpha),\end{align}
respectively. In addition, if
$\liminf_{|x|\longrightarrow\infty}\alpha(x)>1$, then the term
$\int_{-\delta|x|}^{\delta|x|}yf_{x}(y)dy$ in (\ref{eq:1}),
(\ref{eq:2}), (\ref{eq:3}) and (\ref{eq:4})   can be replaced by
$\mathbb{E}[X_1-X_0|X_0=x]=\int_{\R}yf_{x}(y)dy$. Hence, conditions
(\ref{eq:1.2}), (\ref{eq:1.3}), (\ref{eq:1.5}) and (\ref{eq:1.6})
actually say that when the stable-like chain $\{X_n\}_{n\geq0}$ has
moved far away from the origin it cannot have strong tendency to
move further from the origin, while condition (\ref{eq:1.4}) says
that the stable-like chain $\{X_n\}_{n\geq0}$ has a permanent
tendency of moving away from the origin.
 See Section 4 for
the proof of the above equivalences and further discussion about
 conditions (\ref{eq:1.2}),
(\ref{eq:1.3}),  (\ref{eq:1.4}), (\ref{eq:1.5}) and (\ref{eq:1.6}).

As a simple consequence of these results we get a new proof for
 the well-known recurrence and transience property of $S\alpha S$ random walks.
\begin{corollary}\label{c1.4}
A $S\alpha S$ random walk with $1<\alpha\leq2$
 is recurrent. A $S_{\alpha}(0,\gamma,\delta)$ random walk with $0 < \alpha < 1$ and arbitrary shift $\delta\in\R$  is transient.
\end{corollary}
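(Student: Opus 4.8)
The plan is to derive Corollary \ref{c1.4} as a direct specialization of Theorems \ref{tm1.1} and \ref{tm1.2} to the constant-$\alpha$ case, where the chain $\{X_n\}_{n\geq0}$ is literally the $S_\alpha(0,\gamma,\delta)$ random walk. Here $\alpha(x)\equiv\alpha$ is constant, $c(x)\equiv c_\alpha$ is the constant tail coefficient recalled in the introduction, and $f_x(y)=f(y)$ is the (shifted) stable density independent of $x$, so conditions (C1)--(C5) hold trivially. The essential simplification is that, because $\alpha(x)$ and $c(x)$ are constant and the density does not depend on $x$, the quantities appearing in the drift conditions become explicit integrals against a single stable density, and the $\liminf/\limsup$ over $|x|\to\infty$ collapse to the evaluation of a computable limit.

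First I would treat the recurrent case $1<\alpha\le 2$. For $\alpha=2$ the walk is Gaussian and recurrence is classical, so the content is $1<\alpha<2$. I would verify condition (\ref{eq:1.2}): since the density is symmetric ($S\alpha S$), the odd symmetry of $\log(1+\mathrm{sgn}(x)\,y/(1+|x|))$ in $y$ combined with the symmetry $f(y)=f(-y)$ should make the leading-order contribution vanish, and a Taylor expansion of the logarithm together with the exact tail asymptotics $f(y)\sim c_\alpha|y|^{-\alpha-1}$ should show that the normalized integral tends to a limit strictly below $R_1(\alpha)$. Because $R_1(1)=0$ and $R_1(\alpha)$ is strictly increasing with $R_1(\alpha)>0$ for $\alpha>1$, the symmetric walk (whose drift term is essentially zero by symmetry) satisfies (\ref{eq:1.2}) precisely when $\alpha>1$. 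I would carry out the substitution $y\mapsto |x|u$ to put the integral in self-similar form and extract the limit as $|x|\to\infty$ followed by $\delta\to 0$.

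Next I would handle the transient case $0<\alpha<1$ with arbitrary shift $\delta\in\R$, applying Theorem \ref{tm1.2}. Here I must exhibit some $\beta\in(0,1)$ for which (\ref{eq:1.4}) holds, i.e.\ the normalized integral of $1-(1+\mathrm{sgn}(x)\,y/(1+|x|))^{-\beta}$ against $f_x$ exceeds $T(\alpha,\beta)$. The key structural fact is the stated behavior of $T(\alpha,\beta)$: for fixed $\beta\in(0,1)$ it is strictly increasing in $\alpha$ with $T(1-\beta,\beta)=0$, so $T(\alpha,\beta)<0$ whenever $\alpha<1-\beta$. Given $\alpha<1$ I can choose $\beta$ small enough that $\alpha<1-\beta$, making the threshold $T(\alpha,\beta)$ negative; then I only need the left-hand side of (\ref{eq:1.4}) to stay above that negative number. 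Again using symmetry of the $S\alpha S$ density (or handling the shift $\delta$ as a bounded perturbation that is negligible after dividing by $|x|^{\alpha}$), the self-similar integral should converge to a finite limit, which by the choice of $\beta$ exceeds the negative threshold.

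The step I expect to be the main obstacle is making the connection between the abstract drift integrals and the explicit constants $R_1(\alpha)$ and $T(\alpha,\beta)$ fully rigorous, i.e.\ showing that the self-similar limits of these integrals evaluated against the exact stable density reproduce exactly the hypergeometric/series expressions defining $R_1$ and $T$, or at least land on the correct side of them. This amounts to justifying the interchange of the $\delta\to 0$ and $|x|\to\infty$ limits and controlling the error between the true density $f(y)$ and its asymptotic surrogate $c_\alpha|y|^{-\alpha-1}$ uniformly on the relevant range; for the shifted (non-symmetric in the $0<\alpha<1$ case) walk I must also confirm that the shift parameter $\delta$ and the skewness-free assumption enter only through lower-order terms that do not affect the strict inequality. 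Once these evaluations are in place, recurrence and transience follow immediately from the two theorems, and the symmetry of the $S\alpha S$ density is what forces the borderline $R_1(1)=0$ and $T(1-\beta,\beta)=0$ to separate the recurrent regime $\alpha>1$ from the transient regime $\alpha<1$.
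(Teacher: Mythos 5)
Your proposal is correct and follows essentially the same route as the paper: specialize Theorems \ref{tm1.1} and \ref{tm1.2} to the constant-$\alpha$, constant-$c$ random walk (where (C1)--(C5) are immediate), use symmetry/normalization and a Taylor expansion to show the normalized drift integrals vanish in the iterated limit, and conclude by comparing $0$ with the sign of the threshold ($R_1(\alpha)>0$ for $\alpha>1$, and $T(\alpha,\beta)<0$ for $\beta<1-\alpha$), with the $\alpha=2$ case delegated to the classical Gaussian result. The paper merely packages the same Taylor-expansion computation as the pre-established equivalent conditions (\ref{eq:1}) and (\ref{eq:2}) from Section 4, so your inline verification of (\ref{eq:1.2}) and (\ref{eq:1.4}) is the same argument in substance.
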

Note that Theorem \ref{tm1.1} and condition (\ref{eq:1}), that is,
conditions (\ref{eq:1.2}) and (\ref{eq:1.3}), do not imply the
recurrence property of S$1$S random walk since, in this case, the
left-hand side and the right-hand side in (\ref{eq:1}) are
 equal to zero. A simple example of the application of Theorems \ref{tm1.1},
 \ref{tm1.2} and \ref{tm1.3} in the non-random walk case is the
 following. Let $\alpha:\R\longrightarrow(0,2)$,
 $\gamma:\R\longrightarrow(0,\infty)$ and
 $\delta:\R\longrightarrow\R$ be arbitrary Borel measurable
 functions which take finitely many values. Then, the Markov
 chain $\{X_n\}_{n\geq0}$ with $S_{\alpha(x)}(0,\gamma(x),\delta(x))$ jumps satisfies
 conditions (C1)-(C5), that is, it is a stable-like chain. Now, by Theorems \ref{tm1.1},
 \ref{tm1.2} and \ref{tm1.3},
 \begin{enumerate}
   \item [(i)] if $\inf\{\alpha(x):x\in\R\}>1$,
   $\sup\{\delta(x):x>0\}\leq0$ and $\inf\{\delta(x):x<0\}\geq0$,
   then, from (\ref{eq:1}), $\{X_n\}_{n\geq0}$ is recurrent.
   \item [(ii)] if $\sup\{\alpha(x):x\in\R\}<1$,
   then, from (\ref{eq:2}), $\{X_n\}_{n\geq0}$ is transient.
   \item [(iii)] if $\inf\{\alpha(x):x\in\R\}>1$,
   $\sup\{\delta(x):x>0\}<0$ and $\inf\{\delta(x):x<0\}>0$,
   then, from (\ref{eq:1.6}) where we take $\beta=1$, $\{X_n\}_{n\geq0}$ is ergodic.
 \end{enumerate}

 Now, we explain our strategy of  proving the  main results. The
proofs of Theorems~\ref{tm1.1}, \ref{tm1.2} and~\ref{tm1.3} are
based on the \emph{Foster-Lyapunov drift  criteria}  (see
\cite[Theorems 8.4.2, 8.4.3 and 13.0.1]{meyn-tweedie-book}). These
criteria are based on finding an appropriate ``distance" function
$V(x)$ (positive and unbounded in the recurrence case, positive and
bounded in the transience case
 and positive and finite in the  ergodic case)
and
 a compact set
$C\subseteq\R$, such that $\Delta
V(x):=\mathbb{E}[V(X_1)-V(X_0)|X_0=x]\leq0$, in the recurrent case,
$\Delta V(x)\geq0$, in the transient case,
 and $\Delta
V(x)\leq-d$, for some $d>0$, in the ergodic case,  for every $x\in
C^{c}.$ The idea is to find test functions $V(x)$ such that the
associated level sets $C_V(r):=\{y:V(y)\leq r\}$ are ``singletons"
 and such that $C_V(r)\uparrow\R$, for
$r\longrightarrow\infty$, in the cases of recurrence and ergodicity,
and $C_V(r)\uparrow\R$, for $r\longrightarrow 1$, in the case of
transience. In the recurrent case, for the test function we take
$\log(1+|x|)$ and $|x|^{\beta}$, where
$\beta\in(0,1]\cap(0,\liminf_{|x|\longrightarrow\infty}\alpha(x))$
is arbitrary.
 In the transient case, for the test function we take
$V(x)=1-(1+|x|)^{-\beta}$, where $\beta\in(0,1)$ is arbitrary, and
in the ergodic case we take again $\log(1+|x|)$ and $|x|^{\beta}$,
where $\beta\in(0,1]\cap(0,\inf\{\alpha(x):x\in\R\})$ is arbitrary.
Now, by proving
 that
 $$\limsup_{|x|\longrightarrow\infty}\frac{|x|^{\alpha(x)}}{c(x)}\Delta
V(x)<0\quad\textrm{and}\quad\limsup_{|x|\longrightarrow\infty}\frac{|x|^{\alpha(x)-\beta}}{c(x)}\Delta
V(x)<0$$ in the recurrent case,
$$\liminf_{|x|\longrightarrow\infty}\frac{|x|^{\alpha(x)+\beta}}{c(x)}\Delta
V(x)>0$$ in the transient case and
$$\limsup_{|x|\longrightarrow\infty}\frac{|x|^{\alpha(x)}}{c(x)}\left(\Delta
V(x)+d\right)<0\quad\textrm{and}\quad\limsup_{|x|\longrightarrow\infty}\frac{|x|^{\alpha(x)-\beta}}{c(x)}\left(\Delta
V(x)+d\right)<0,$$ for some $d>0$, in the ergodic case, the proofs
of Theorems~\ref{tm1.1}, \ref{tm1.2} and~\ref{tm1.3} are
accomplished.

Let us remark that a similar approach,  using similar test
functions, can be found in \cite{lamperti}, \cite{rus} and
\cite{sandric-rectrans} in the discrete-time case and in
\cite{stramer}, \cite{wang-ergodic} and \cite{sandric-spa}  in the
continuous-time case.

The paper is organized as follows. In Section 2  we discuss several
structural properties of stable-like chains  which will be crucial
in finding sufficient conditions for   recurrence, transience and
ergodicity. In Section 3, using the Foster-Lyapunov drift criteria,
we give the proofs of Theorems \ref{tm1.1}, \ref{tm1.2} and
\ref{tm1.3}. Finally, in Section 4, we  discuss conditions
(\ref{eq:1.2}), (\ref{eq:1.3}), (\ref{eq:1.4}), (\ref{eq:1.5}) and
(\ref{eq:1.6}) and some consequences of the main results.

Throughout the paper we use the following notation.  We write
$\ZZ_+$ and $\ZZ_-$ for nonnegative  and nonpositive integers,
respectively. For $x,y\in\R$ let $x\wedge y :=\min\{x,y\}$ and
$x\vee y :=\max\{x,y\}$. Furthermore, $\{X_n\}_{n\geq0}$ will denote
a stable-like Markov chain  given by (\ref{eq:1.1}) with transition
densities satisfying conditions (C1)-(C5), while $\{Y_n\}_{n\geq0}$
will denote an arbitrary Markov chain on
$(\R^{d},\mathcal{B}(\R^{d}))$ given by the transition kernel
$p(x,B)$. For $x\in\R^{d}$, $B\in \mathcal{B}(\R^{d})$ and $n\in\N$
let $p^{n}(x,B):=\mathbb{P}^{x}(Y_n\in B):=\mathbb{P}(Y_n\in
B|Y_0=x)$.

\section{Preliminary and auxiliary results}

\quad \ \ In this section, we discuss several structural properties
of stable-like chains which are crucial for deriving sufficient
conditions for recurrence, transience and ergodicity.

First, recall that, according to \cite[Theorem
10.4.9]{meyn-tweedie-book}, if $\chain{Y}$ is a  recurrent chain,
then it possesses a unique (up to constant multiples) invariant
measure. If the invariant measure is finite, then it may be
normalized to a probability measure. The chain $\chain{Y}$ is called
a \emph{positive chain} if it admits an invariant probability
measure. Otherwise, it is called a \emph{null chain}. In the case of
stable-like chains, from \cite[Theorems 11.5.1 and 11.5.2]{meyn-tweedie-book},
we have the following result.
\begin{theorem}\label{tm2.2}
Let $\alpha:\R\longrightarrow(0,2)$ be an arbitrary function such
that
$$\displaystyle\limsup_{|x|\longrightarrow\infty}\alpha(x)=:\alpha<2$$
and let $\beta\in(0,1]\cap(0,\inf\{\alpha(x):x\in\R\})$ be
arbitrary. Furthermore, let  $\{f_x:x\in\R\}$ be a family of density
functions on $\R$ and $c:\R\longrightarrow(0,\infty)$  which satisfy
conditions (C1)-(C5) and
 such that
 \begin{align*}\liminf_{\delta\longrightarrow0}\liminf_{|x|\longrightarrow\infty}\frac{|x|^{\alpha(x)}}{c(x)}\int_{-\delta|x|}^{\delta|x|}\log\left(1+\mathrm
{sgn}\it\,
(x)\frac{y}{\rm{1}+\it{|x|}}\right)f_{x}(y)dy>R_{\rm{1}}(\alpha)
\end{align*} 
or
\begin{align*}\liminf_{\delta\longrightarrow0}\liminf_{|x|\longrightarrow\infty}\frac{|x|^{\alpha(x)}}{c(x)}\int_{-\delta|x|}^{\delta|x|}\left(\left(1+\mathrm
{sgn}\it\,
(x)\frac{y}{\it{|x|}}\right)^{\beta}-1\right)f_{x}(y)dy>R_2(\alpha,\beta)
\end{align*} 
 holds. Then the stable-like Markov chain
$\{X_n\}_{n\geq0}$     is a null chain, provided it is recurrent.
\end{theorem}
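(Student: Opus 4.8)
The plan is to apply the Foster--Lyapunov drift criterion for non-positivity from \cite[Theorems 11.5.1 and 11.5.2]{meyn-tweedie-book}: if $\chain{X}$ is recurrent and there exist a norm-like function $V$ (that is, $V\geq0$ with $V(x)\to\infty$ as $|x|\to\infty$) and a compact set $C\subseteq\R$ such that $\Delta V(x)\geq0$ for all $x\in C^{c}$, then $\chain{X}$ cannot admit a finite invariant measure and is therefore a null chain. The natural candidates for $V$ are precisely the two test functions already used in the recurrence direction, namely $V(x)=\log(1+|x|)$ under the hypothesis involving $R_1(\alpha)$ and $V(x)=|x|^{\beta}$ under the hypothesis involving $R_2(\alpha,\beta)$, both of which are norm-like.

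Accordingly, the heart of the argument is to establish
\begin{align*}
\liminf_{|x|\to\infty}\frac{|x|^{\alpha(x)}}{c(x)}\,\Delta V(x)>0 \quad\text{for } V(x)=\log(1+|x|),
\end{align*}
and
\begin{align*}
\liminf_{|x|\to\infty}\frac{|x|^{\alpha(x)-\beta}}{c(x)}\,\Delta V(x)>0 \quad\text{for } V(x)=|x|^{\beta}.
\end{align*}
Since $|x|^{\alpha(x)}/c(x)>0$, either estimate forces $\Delta V(x)\geq0$ for all sufficiently large $|x|$, i.e. outside some compact $C$, which is exactly what the criterion requires. These are the mirror images of the estimates $\limsup(\cdots)<0$ established in the proof of Theorem~\ref{tm1.1}; the only change is that the strict inequality in the hypothesis now points the other way ($\liminf(\cdots)>R_1(\alpha)$ resp. $>R_2(\alpha,\beta)$), and $\alpha$ is now the $\limsup$ rather than the $\liminf$ of $\alpha(x)$, which is the value for which the monotone constants $R_1,R_2$ deliver the sharp bound.

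To produce these estimates I would follow the computation in the proof of Theorem~\ref{tm1.1} essentially verbatim, splitting the drift $\Delta V(x)=\int_{\R}\bigl(V(x+y)-V(x)\bigr)f_{x}(y)\,dy$ into the small-jump region $\{|y|\leq\delta|x|\}$ and the large-jump region $\{|y|>\delta|x|\}$. On the small-jump region one linearizes $V(x+y)-V(x)$, obtaining for $V=\log(1+|x|)$ the integrand $\log\bigl(1+\mathrm{sgn}(x)\tfrac{y}{1+|x|}\bigr)$ that appears in the hypothesis, so that after multiplication by $|x|^{\alpha(x)}/c(x)$ this piece is controlled by the assumed $\liminf$. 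On the large-jump region one replaces $f_x(y)$ by its asymptotic equivalent $c(x)|y|^{-\alpha(x)-1}$ using the uniformity condition (C3), rescales $y=|x|u$, and in the limit $|x|\to\infty$ followed by $\delta\to0$ the large-jump integral combines with the small-jump term to reproduce the comparison with $R_1(\alpha)$ (resp. $R_2(\alpha,\beta)$) featuring in the hypothesis; the strict inequality there then yields a strictly positive lower bound for the total scaled drift.

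The main obstacle is the same delicate asymptotic analysis of the large-jump contribution that drives Theorem~\ref{tm1.1}: one must justify the interchange of the limits in $|x|$ and $\delta$, control the error in replacing $f_x$ by its tail asymptotics uniformly in $x$ for $|x|$ large (this is precisely where (C3) together with the nondegeneracy of $c(x)$ in (C4) are used), and verify that the resulting scaling integrals converge with the correct dependence on the $\limsup$ value of $\alpha(x)$. Once the scaled-drift lower bounds are in hand, the conclusion is immediate from the cited non-positivity criterion together with the standing recurrence hypothesis.
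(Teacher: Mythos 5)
Your overall strategy coincides with the paper's: the author explicitly says that the proof of Theorem~\ref{tm2.2} ``completely follows the proof of Theorem~\ref{tm1.1}'', i.e.\ one reruns the Step 1--4 computations of Theorem~\ref{tm1.1} with all inequalities reversed, for the same two test functions $V(x)=\log(1+|x|)$ and $V(x)=|x|^{\beta}$, observes that by monotonicity of $R_1$ and $R_2$ in $\alpha$ the relevant limiting value is now $\alpha=\limsup_{|x|\rightarrow\infty}\alpha(x)$, and feeds the resulting estimates $\liminf_{|x|\rightarrow\infty}\frac{|x|^{\alpha(x)}}{c(x)}\Delta V(x)>0$ (resp.\ the $|x|^{\alpha(x)-\beta}$ version) into the non-positivity criterion of \cite[Theorems 11.5.1 and 11.5.2]{meyn-tweedie-book}. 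You identified the correct test functions, the correct scaled-drift inequalities, and the correct reason why $\alpha$ switches from $\liminf$ to $\limsup$.

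There is, however, a genuine gap in how you quote the criterion. As you state it --- recurrent chain, norm-like $V$, $\Delta V\geq0$ off a compact set, hence null --- the criterion is false. The Meyn--Tweedie non-positivity theorems require, in addition to the drift condition and the existence of reachable states where $V$ exceeds $\sup_{C}V$, control of the increments of $V$, in the form of uniformly bounded mean increments $\sup_{x}\int p(x,dy)\,|V(y)-V(x)|<\infty$. Without this the implication fails: for example, the chain on $\ZZ_+$ which from $n\geq2$ jumps to $0$ with probability $1-n^{-2}$ and to $n^{3}$ with probability $n^{-2}$ (and from $\{0,1\}$ jumps to $2$) is positive recurrent, yet $V(n)=n$ is norm-like and satisfies $\Delta V(n)=n^{-2}\cdot n^{3}-n=0$ off a finite set. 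In the present setting the increment bound is not automatic, and verifying it is precisely where the hypothesis $\beta\in(0,1]\cap(0,\inf\{\alpha(x):x\in\R\})$ of Theorem~\ref{tm2.2} --- strictly stronger than the requirement $\beta<\liminf_{|x|\rightarrow\infty}\alpha(x)$ of Theorem~\ref{tm1.1} --- enters: for $V(x)=|x|^{\beta}$ one has $|V(x+y)-V(x)|\leq|y|^{\beta}$, and $\sup_{x}\int_{\R}|y|^{\beta}f_x(y)dy<\infty$ follows from (C3), the boundedness of $c(\cdot)$ and $\beta<\inf\{\alpha(x):x\in\R\}$; for $V(x)=\log(1+|x|)$ one uses $|V(x+y)-V(x)|\leq\log(1+|y|)$ analogously. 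Your proposal never states or checks this hypothesis, and never explains why the admissible range of $\beta$ differs from that of Theorem~\ref{tm1.1}, so as written the argument rests on an incorrectly quoted lemma; once the increment condition is added and verified, the rest of your proof goes through and agrees with the paper's.
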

The proof of this statement completely follows the proof of Theorem
\ref{tm1.1}, hence
we omit it here. Recall that
 every
positive chain must be recurrent (see  \cite[Proposition 10.1.1]{meyn-tweedie-book}) and a random walk is never
 positive  since it cannot possess a finite invariant measure (see \cite[Exercise 29.6]{sato-book}).

Next, recall that a Markov chain $\chain{Y}$ is called
\emph{aperiodic} if there does not exist a partition
$\R^{d}=P_1\cup\ldots\cup P_m$ for some $m\geq2$,
$P_1,\ldots,P_m\in\mathcal{B}(\R^{d})$, such that $p(x, P_{i+1}) =
1$ for all $x\in P_i$ and all $1\leq i\leq m - 1$ and $p(x, P_1) =
1$ for all $x\in P_m$. The aperiodicity of stable-like chains has
been shown in  \cite[Proposition 2.4]{sandric-rectrans}. Now, as in
the countable state case, one would expect that every positive
recurrent and aperiodic chain is automatically ergodic, but in
general this is not true. The recurrence property is too weak. We
need  Harris recurrence  (see \cite[Theorem
13.2.5]{meyn-tweedie-book}). Hence, every positive Harris recurrent
and aperiodic chain is ergodic (see \cite[Theorem
13.3.3]{meyn-tweedie-book}). In the case of stable-like chains we
have even more.  According to \cite[Proposition
10.1.1]{meyn-tweedie-book}, in the case of stable-like chains these
two properties coincide. Thus, in order to check ergodicity of
stable-like chains it suffices to check that the corresponding
invariant measure is finite, and, by \cite[Theorem
13.0.1]{meyn-tweedie-book},  sufficient conditions for the
finiteness of the corresponding invariant measure have been given in
Theorem \ref{tm1.3}.

Further, as a simple consequence of Theorems \ref{tm1.1} and
\ref{tm1.3}, \cite[Theorems 9.2.2, 17.0.1 and
18.3.2]{meyn-tweedie-book} and \cite[Proposition
5.2]{sandric-rectrans}, we get the following additional long-time
properties of stable-like chains.
\begin{corollary}\begin{enumerate}
\item [(i)] Under   assumptions of Theorem  \ref{tm1.1},  for every initial position
$x\in\R$ and every   covering $\{O_n\}_{n\in\N}$ of $\R$ by open
bounded sets we have
$$\mathbb{P}^{x}\left(\bigcap_{n=1}^{\infty}\left\{\sum_{k=1}^{\infty}1_{\{X_k\in
O_n\}}<\infty\right\}\right)=0.$$ In other words, for every initial
position $x\in\R$ the event $\{X_n\in C^{c}\ \textrm{for any compact
set}\ C\subseteq\R\ \textrm{and all}\ n\in\N\ \textrm{sufficiently
large}\}$ has probability $0$.
\item[(ii)] Under   assumptions of Theorem  \ref{tm1.3}, for every initial position
$x\in\R$ and every $\varepsilon>0$ there exists a compact set
$C\subseteq\R$, such that
$$\liminf_{n\longrightarrow\infty}\mathbb{P}^{x}(X_n\in
C)\geq1-\varepsilon\quad\textrm{and}\quad\liminf_{
n\longrightarrow\infty}\mathbb{E}^{x}\left[\frac{1}{n}\sum_{k=1}^{n}1_{\{X_k\in
C\}}ds\right]\geq 1-\varepsilon.$$
\end{enumerate}
\end{corollary}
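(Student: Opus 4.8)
\emph{Proof plan.} I would handle the two parts separately, in each case reducing the assertion to a standard criterion from \cite{meyn-tweedie-book} once the relevant structural facts about stable-like chains have been recorded. The substantive work is already contained in Theorems \ref{tm1.1} and \ref{tm1.3}; what remains is a short deduction.

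\textbf{Part (i).} Under the hypotheses of Theorem \ref{tm1.1} the chain $\chain{X}$ is recurrent, and since recurrence and Harris recurrence coincide for stable-like chains (\cite[Proposition 5.3]{sandric-rectrans}), it is in fact Harris recurrent. The cleanest rigorous argument is direct: each $O_n$ is nonempty and open, hence $\lambda(O_n)>0$, and for a Harris recurrent chain every set of positive measure is visited infinitely often almost surely, so $\mathbb{P}^{x}(\sum_k 1_{\{X_k\in O_1\}}<\infty)=0$ for all $x$; because $\bigcap_{n}\{\sum_k 1_{\{X_k\in O_n\}}<\infty\}\subseteq\{\sum_k 1_{\{X_k\in O_1\}}<\infty\}$, the intersection has probability $0$. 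For the ``in other words'' reformulation I would note that, since each $\overline{O_n}$ is compact and any compact $C\subseteq\R$ is covered by finitely many of the $O_n$, the displayed intersection is exactly the evanescent set $\{X_n\in C^{c}\ \text{for every compact}\ C\ \text{and all large}\ n\}$; thus the statement is precisely non-evanescence of $\chain{X}$, which is also obtained from \cite[Theorem 9.2.2]{meyn-tweedie-book} once one knows the chain is a $\psi$-irreducible Harris recurrent T-chain, the T-chain property following from (C1)--(C5) (\cite[Proposition 5.2]{sandric-rectrans}).

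\textbf{Part (ii).} Under the hypotheses of Theorem \ref{tm1.3} the chain is ergodic, so there is an invariant probability measure $\pi(\cdot)$ with $||\mathbb{P}^{x}(X_n\in\cdot)-\pi(\cdot)||\longrightarrow0$ for every $x\in\R$. Since $\pi$ is a Borel probability measure on $\R$ it is tight, so given $\varepsilon>0$ I would fix a compact $C\subseteq\R$ with $\pi(C)>1-\varepsilon$. Total variation convergence then gives $|\mathbb{P}^{x}(X_n\in C)-\pi(C)|\leq||\mathbb{P}^{x}(X_n\in\cdot)-\pi(\cdot)||\longrightarrow0$, whence $\liminf_{n}\mathbb{P}^{x}(X_n\in C)=\pi(C)>1-\varepsilon$, which is the first bound. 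For the second, I would write $\mathbb{E}^{x}[\frac{1}{n}\sum_{k=1}^{n}1_{\{X_k\in C\}}]=\frac{1}{n}\sum_{k=1}^{n}\mathbb{P}^{x}(X_k\in C)$ and use that the Ces\`aro averages of the convergent sequence $\mathbb{P}^{x}(X_k\in C)\longrightarrow\pi(C)$ converge to the same limit, so $\liminf_{n}\frac{1}{n}\sum_{k=1}^{n}\mathbb{P}^{x}(X_k\in C)=\pi(C)>1-\varepsilon$; equivalently one invokes the law of large numbers for positive Harris chains (\cite[Theorem 17.0.1]{meyn-tweedie-book}) together with bounded convergence, the positive Harris recurrence being supplied by ergodicity, or the boundedness-in-probability results of \cite[Theorem 18.3.2]{meyn-tweedie-book}.

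\textbf{Main obstacle.} Given Theorems \ref{tm1.1} and \ref{tm1.3}, the corollary is routine, and the only points requiring genuine care are the identification in (i) of the stated intersection with Meyn and Tweedie's evanescent set via the covering by bounded open sets, and in (ii) the tightness of $\pi$ together with the passage from total variation convergence to the two $\liminf$ bounds. I expect the most delicate step to be confirming that the kernel $p(x,dy)=f_x(y-x)dy$ places $\chain{X}$ within the scope of \cite[Theorem 9.2.2]{meyn-tweedie-book}, i.e. that under (C1)--(C5) the chain is a T-chain; this is exactly the structural input I would draw from \cite{sandric-rectrans} rather than reprove here.
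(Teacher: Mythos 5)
Your proposal is correct, and it is worth noting that the paper itself gives no written proof: the corollary is stated as an immediate consequence of Theorems \ref{tm1.1} and \ref{tm1.3} together with \cite[Theorems 9.2.2, 17.0.1 and 18.3.2]{meyn-tweedie-book} and the T-chain property from \cite[Proposition 5.2]{sandric-rectrans}. Your argument reaches the same conclusions by a partly different and more self-contained route. In (i) you replace the T-chain/non-evanescence machinery of \cite[Theorem 9.2.2]{meyn-tweedie-book} by Harris recurrence (via \cite[Proposition 5.3]{sandric-rectrans}) plus the standard fact that a Harris recurrent chain visits every set of positive irreducibility measure infinitely often almost surely; the only content beyond that is your (correct) covering argument identifying the displayed intersection with the evanescent set, using that closures of the bounded $O_n$ are compact and that any compact $C\subseteq\R$ admits a finite subcover --- this identification is exactly what the citation-based route also needs but leaves implicit. (A trivial point: a covering may contain empty sets, so you should pick \emph{some} nonempty $O_{n_0}$ rather than assert each $O_n$ is nonempty; since the $O_n$ are bounded and cover $\R$, infinitely many are nonempty.) In (ii) your argument is genuinely more elementary than the paper's citations: tightness of the invariant probability measure $\pi$, the bound $|\mathbb{P}^{x}(X_n\in C)-\pi(C)|\leq||\mathbb{P}^{x}(X_n\in\cdot)-\pi(\cdot)||\longrightarrow 0$, and Ces\`aro convergence of the sequence $\mathbb{P}^{x}(X_k\in C)$ give both $\liminf$ bounds directly, with no appeal to \cite[Theorems 17.0.1 and 18.3.2]{meyn-tweedie-book}. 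What the paper's route buys is brevity and uniformity with the rest of its structural toolkit; what yours buys is a proof readable from the definitions of recurrence, Harris recurrence and ergodicity alone.
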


We end this section with the following observation. Assume that
$\chain{Y}$ is an ergodic Markov chain with invariant measure
$\pi(\cdot)$. Then, clearly,
$$\lim_{n\longrightarrow\infty}\mathbb{E}^{x}[f(Y_n)]=\int_{\R^{d}}f(y)\pi(dy)=:\pi(f)$$
holds for all $x\in\R^{d}$ and all bounded Borel measurable
functions $f(y).$
 In what follows,  we extend this convergence to a wider class of functions. For any Borel measurable function
$f(y)\geq 1$ and any signed measure $\mu(\cdot)$ on
$\mathcal{B}(\R^{d})$ we write
$$||\mu||_f:=\sup_{|g|\leq f}|\mu(g)|.$$ A  Markov chain $\chain{Y}$ is called $f$-\emph{ergodic} if it is positive Harris
recurrent with the invariant probability measure $\pi(\cdot)$, if
$\pi(f) < \infty$, and if
$$\lim_{n\longrightarrow\infty}||p^{n}(x,\cdot)-\pi(\cdot)||_f=0$$ holds for all  $x\in\R^{d}.$
Note that $||\cdot||_1=||\cdot||$. Hence,   $f$-ergodicity implies
ergodicity. Now, from \cite[Theorem 14.0.1]{meyn-tweedie-book}, we
have the following.
\begin{theorem}\label{tm2.4} Let $\alpha:\R\longrightarrow(0,2)$ be an arbitrary function such
that
$$0<\inf\{\alpha(x):x\in\R\},$$ let
$\alpha:=\liminf_{|x|\longrightarrow\infty}\alpha(x)$ and let
$\beta\in(0,1]\cap(0,\inf\{\alpha(x):x\in\R\})$ be arbitrary.
Furthermore, let $\{f_x:x\in\R\}$ be a family of density functions
on $\R$ and $c:\R\longrightarrow(0,\infty)$  which satisfy
conditions (C1)-(C5) and
 such that
\begin{align}\label{eq:2.4}\limsup_{d\longrightarrow0}\limsup_{\delta\longrightarrow0}\limsup_{|x|\longrightarrow\infty}\frac{|x|^{\alpha(x)}}{c(x)}\left(\int_{-\delta|x|}^{\delta|x|}\log\left(1+\mathrm
{sgn}\it\,
(x)\frac{y}{\rm{1}+\it{|x|}}\right)f_{x}(y)dy+dg(x)\right)<R_1(\alpha)
\end{align}
or
\begin{align}\label{eq:2.5}\limsup_{d\longrightarrow0}\limsup_{\delta\longrightarrow0}\limsup_{|x|\longrightarrow\infty}\frac{|x|^{\alpha(x)}}{c(x)}\left(\int_{-\delta|x|}^{\delta|x|}\left(\left(1+\mathrm
{sgn}\it\,
(x)\frac{y}{\it{|x|}}\right)^{\beta}-1\right)f_{x}(y)dy+d\left(g(x)\right)^{-\beta}\right)<R_2(\alpha,\beta),
\end{align} for some Borel measurable function $g(x)\geq1$.
Then the stable-like Markov chain $\{X_n\}_{n\geq0}$     is
$f$-ergodic for every Borel measurable function $f(x)\geq1$ such
that $f(x)\leq g(x)$.
\end{theorem}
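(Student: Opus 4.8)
The plan is to verify the $f$-norm Foster--Lyapunov drift condition of \cite[Theorem 14.0.1]{meyn-tweedie-book} and read off $f$-ergodicity from it. That criterion asks for a petite set $C$, a constant $b<\infty$ and a non-negative test function $V$ with
\[
\Delta V(x)=\mathbb{E}[V(X_1)-V(X_0)\,|\,X_0=x]\leq -f(x)+b\,1_C(x),\qquad x\in\R ,
\]
the chain being, from Section 2 and \cite{sandric-rectrans}, Lebesgue irreducible and aperiodic with every compact set petite. Since this drift already forces positive Harris recurrence and $\pi(f)<\infty$, the whole conclusion follows once the inequality is established; moreover, because $f\leq g$ gives $\|\cdot\|_f\leq\|\cdot\|_g$, it is enough to prove the inequality with $g$ in place of $f$, whence $g$-ergodicity, and therefore $f$-ergodicity for every such $f$, will follow.

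For the test function I would take exactly the functions from Theorem~\ref{tm1.3}: $V(x)=\log(1+|x|)$ under \eqref{eq:2.4} and $V(x)=|x|^{\beta}$ under \eqref{eq:2.5}. The computation of $\Delta V(x)$ is then identical to the proof of Theorem~\ref{tm1.3}: split $\Delta V(x)=\int_{|y|\leq\delta|x|}(\cdots)f_x(y)\,dy+\int_{|y|>\delta|x|}(\cdots)f_x(y)\,dy$, observe that the small-jump integral is precisely the integral appearing inside \eqref{eq:2.4} (resp. \eqref{eq:2.5}), and use the uniform tail behaviour (C3), after the substitution $y=|x|u$, to show that the big-jump integral, scaled by $|x|^{\alpha(x)}/c(x)$ (resp. $|x|^{\alpha(x)-\beta}/c(x)$), converges in the iterated limit $|x|\to\infty$ then $\delta\to0$ to $-R_1(\alpha)$ (resp. $-R_2(\alpha,\beta)$). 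I would quote that computation verbatim rather than redo it.

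The one genuinely new step is to carry the position-dependent term through. In Theorem~\ref{tm1.3} the additive constant $d$ produces, after recombining the two pieces, the drift $\Delta V(x)\leq -d$ outside a compact set. Here the constant is replaced by $d\,g(x)$ (resp. $d\,(g(x))^{-\beta}$), so combining the small-jump bound coming from \eqref{eq:2.4} with the big-jump limit $-R_1(\alpha)$ gives, for suitably small fixed $d,\delta$ and all large $|x|$,
\[
\frac{|x|^{\alpha(x)}}{c(x)}\bigl(\Delta V(x)+d\,g(x)\bigr)<0,\qquad\text{hence}\qquad \Delta V(x)\leq -d\,g(x).
\]
The case \eqref{eq:2.5} is handled analogously, with $V=|x|^{\beta}$, the scaling $|x|^{\alpha(x)-\beta}/c(x)$ and the constant $R_2(\alpha,\beta)$, the term $d\,(g(x))^{-\beta}$ again yielding a $g$-modulated drift outside a compact set. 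Rescaling the test function to $\tilde V:=V/d$ turns this into $\Delta\tilde V(x)\leq -g(x)\leq -f(x)$ for $|x|$ outside some compact $C$; on $C$, where $\Delta\tilde V$ is bounded because $\int\log(1+|y|)\,|y|^{-\alpha(x)-1}\,dy<\infty$, the remainder is absorbed into a finite constant $b=\sup_{x\in C}\bigl(\Delta\tilde V(x)+g(x)\bigr)$. Taking $C$ compact, hence petite, yields the full drift inequality, and \cite[Theorem 14.0.1]{meyn-tweedie-book} delivers $f$-ergodicity.

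I expect two points to need the most care. First, the uniform big-jump asymptotics: condition (C3) must be strong enough that replacing the constant $d$ by the $x$-dependent $d\,g(x)$ does not disturb the limits $-R_1(\alpha)$ and $-R_2(\alpha,\beta)$, so that the split above closes with the correct constants and with $\alpha=\liminf_{|x|\to\infty}\alpha(x)$ via the monotonicity of $R_1$ and $R_2$; this is inherited from Theorem~\ref{tm1.3} but should be checked to survive the perturbation. Second, the compact-set term: the constant $b$ is finite only when $g$ is controlled on $C$, so the passage on the petite set, together with the precise matching of the power-function drift in the case \eqref{eq:2.5}, is where the bookkeeping is most delicate.
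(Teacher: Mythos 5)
Your proposal is, in essence, the paper's own proof written out in full: the paper disposes of Theorem \ref{tm2.4} in two lines, saying it follows from \cite[Theorem 14.0.1]{meyn-tweedie-book} and Theorem \ref{tm1.3} ``by replacing the constant $1$ by $g(x)$'', and your reduction ($g$-modulated drift $\Rightarrow$ $g$-ergodicity $\Rightarrow$ $f$-ergodicity for all $f\leq g$), your choice of test functions $\log(1+|x|)$ and $|x|^{\beta}$, the reuse of the Theorem \ref{tm1.1}/\ref{tm1.3} big-jump computation, and the rescaling $\tilde V=V/d$ are exactly that argument. For the logarithmic case \eqref{eq:2.4} your write-up is complete and, if anything, more careful than the original.

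The soft spot is the one you flag yourself: the power case \eqref{eq:2.5}. With $V(x)=|x|^{\beta}$ the relevant identity is
\begin{align*}
\frac{|x|^{\alpha(x)-\beta}}{c(x)}\bigl(\Delta V(x)+d\,g(x)\bigr)
=\frac{|x|^{\alpha(x)}}{c(x)}\Bigl(|x|^{-\beta}\Delta V(x)+d\,g(x)\,|x|^{-\beta}\Bigr),
\end{align*}
and $|x|^{-\beta}\Delta V(x)$ decomposes into the small-jump integral appearing in \eqref{eq:2.5} plus big-jump terms whose iterated limits sum to $-R_2(\alpha,\beta)$. Hence the hypothesis that closes the argument is \eqref{eq:1.6} with its factor $1$ replaced by $g(x)$, i.e.\ with the additive term $d\,g(x)\,|x|^{-\beta}$ --- not $d\,(g(x))^{-\beta}$ as \eqref{eq:2.5} is printed. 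Taking \eqref{eq:2.5} literally, the computation yields only $\Delta V(x)\leq -d\,\bigl(|x|/g(x)\bigr)^{\beta}$ outside a compact set, which is bounded above by $-d'\,g(x)$ only under the unstated growth assumption $g(x)^{1+\beta}=O(|x|^{\beta})$; for instance $g(x)=1+|x|$ makes \eqref{eq:2.5} essentially identical to \eqref{eq:1.6}, which certainly cannot force $\pi(1+|x|)<\infty$. So your sentence ``the term $d(g(x))^{-\beta}$ again yielding a $g$-modulated drift'' does not follow as written. This is a defect inherited from the statement rather than from your strategy: the paper's proof makes exactly the same silent leap, and \eqref{eq:2.5} is almost certainly a misprint for the $d\,g(x)\,|x|^{-\beta}$ version produced by the paper's own ``replace $1$ by $g(x)$'' recipe. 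A similar remark applies to your constant $b=\sup_{x\in C}\bigl(\Delta\tilde V(x)+g(x)\bigr)$: its finiteness needs $g$ to be bounded on compact sets, which the theorem does not assume and the paper never addresses (conditions \eqref{eq:2.4}--\eqref{eq:2.5} constrain $g$ only at infinity, while a $g$ blowing up inside a compact set can make $\pi(g)=\infty$, so some local control of $g$ is genuinely necessary for the stated conclusion).
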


\section{Proof of  the main results}
\quad \ \ In this section we give the proofs of Theorems
\ref{tm1.1}, \ref{tm1.2}, \ref{tm1.3} and \ref{tm2.4} and Corollary
\ref{c1.4}. Before the proofs, we recall several special functions
we need. The Gamma function is defined by the formula
$$\Gamma(z):=\int_0^{\infty}t^{z-1}e^{-t}dt,\quad z\in\CC,\ \mathrm{Re}\it(z)>\rm
0,$$ and  it can be analytically continued on $\CC\setminus\ZZ_-$.
The Digamma function is a function defined by
$\Psi(z):=\Gamma'(z)/\Gamma(z),$ for $z\in \CC\setminus\ZZ_-,$ and
it satisfies the following properties:
\begin{enumerate}
\item [(i)]
  \be\label{eq:3.1}\Psi(1+z)=-\gamma+\sum_{n=1}^{\infty}\frac{z}{n(n+z)},\ee
  where $\gamma$ is Euler's number;
\item [(ii)]\be\label{eq:3.2}\Psi(1+z)=\Psi(z)+\frac{1}{z};\ee
  \item [(iii)]
\be\label{eq:3.3}\Psi(2z)=\frac{1}{2}\Psi(z)+\frac{1}{2}\Psi\left(z+\frac{1}{2}\right)+\log2;\ee
  \item [(iv)]
\be\label{eq:3.4}\Psi(1-z)=\Psi(z)+\pi\rm{ctg}\,(\pi z).\ee
\end{enumerate}

The Gauss hypergeometric function  is defined by the formula
\be\label{eq:3.5}_2F_1(a,b,c;z):=\sum_{n=0}^{\infty}\frac{(a)_n(b)_n}{(c)_n}\frac{z^{n}}{n!},\ee
for $a,b,c,z\in \CC$, $c\notin\ZZ_-$, where for $w\in\CC$ and
$n\in\ZZ_+$, $(w)_n$ is defined by
$$(w)_0=1\quad \textrm{and}\quad (w)_n=w(w+1)\cdots(w+n-1).$$
The series (\ref{eq:3.5}) absolutely converges on $|z|<1$,
absolutely converges on $|z|\leq1$ when $\mathrm{Re}\it(c-a-b)>\rm
0$, conditionally converges on $|z|\leq1$, except for $z=1$, when
$-1<\mathrm{Re} \it(c-b-a)\leq \rm0$ and diverges when
$\mathrm{Re}\it (c-b-a)\leq\rm-1$. In the case when $\mathrm{Re}
\it(c)>\mathrm{Re} \it (b)>\rm0$, it can be analytically continued
on $\CC\setminus(1,\infty)$ by the formula
\be\label{eq:3.6}_2F_1(a,b,c;z)=\frac{\Gamma(c)}{\Gamma(b)\Gamma(c-b)}\int_{0}^{1}t^{b-1}(1-t)^{c-b-1}(1-tz)^{-a}dt.\ee
For further properties of the Gamma function, Digamma function and
hypergeometric functions see \cite[Chapters 6 and
15]{abram-stegun-book}.

\begin{proof}[Proof of Theorem \ref{tm1.1}]
In \cite[Theorem 1.3]{sandric-rectrans} it has been proved that if
$1<\alpha:=\liminf_{|x|\longrightarrow\infty}\alpha(x)$ and if
\begin{align}\label{eq:3.7}&\limsup_{\delta\longrightarrow0}\limsup_{|x|\longrightarrow\infty}\frac{|x|^{\alpha(x)}}{c(x)}\int_{-\delta|x|}^{\delta|x|}\log\left(1+\mathrm
{sgn}\it\,
(x)\frac{y}{\rm{1}+\it{|x|}}\right)f_{x}(y)dy\nonumber\\&<\displaystyle\sum_{i=1}^{\infty}\frac{1}{i(2i-\alpha)}-\frac{\log
2}{\alpha}-\frac{1}{2\alpha}\left(\Psi\left(\frac{\alpha+1}{2}\right)-\Psi\left(\frac{\alpha}{2}\right)\right)+\frac{\gamma}{\alpha}+\frac{\Psi(\alpha)}{\alpha},
\end{align} then the corresponding stable-like chain is recurrent.
Further, by assuming (\ref{eq:3.7}) and performing completely the
same proof as in \cite[Theorem 1.3]{sandric-rectrans},  the same
conclusion also remains true  in the case when
$0<\alpha:=\liminf_{|x|\longrightarrow\infty}\alpha(x).$ Finally,
from (\ref{eq:3.1}),  (\ref{eq:3.3}) and (\ref{eq:3.4}), the
right-hand side in  (\ref{eq:3.7}) equals $R_1(\alpha)$, that is,
(\ref{eq:3.7}) becomes (\ref{eq:1.2}). Thus, the first claim
follows.

To prove the second claim, we divide the proof into four steps.

\textbf{Step 1.} In the first step we explain  our strategy of the
proof. Let
$\beta\in(0,1]\cap(0,\liminf_{|x|\longrightarrow\infty}\alpha(x))$
be arbitrary and let us define the function
$V:\R\longrightarrow[0,\infty)$ by the formula
$$V(x):=|x|^{\beta}.$$
Clearly, the level set $C_V(r):=\{y:V(y)\leq r\}$ is a bounded set
for every $r>0$. Thus, by \cite[Prposition 2.6]{sandric-rectrans}
and \cite[Theorem 8.4.3]{meyn-tweedie-book}, it suffices to show
that there exists $r_0>0$ such that
$$\int_{\R}p(x,dy)V(y)-V(x)\leq0$$ for all $x\in C^{c}_V(r_0)$. Next,
since $C_V(r)\uparrow\R$, for $r\longrightarrow\infty$, it is enough
to show that
$$\displaystyle\limsup_{|x|\longrightarrow\infty}\frac{|x|^{\alpha(x) -\beta}}{c(x)}\left(\int_{\R}p(x,dy)V(y)-V(x)\right)<0.$$
We have
\begin{align}&|x|^{-\beta}\left(\displaystyle\int_{\R}p(x,dy)V(y)-V(x)\right)=|x|^{-\beta}\left(\displaystyle\int_{\R}f_x(y)V(y+x)dy-V(x)\right)\nonumber\\
=&\label{eq:3.8}\displaystyle\int_{\{y+x>0\}}\left(\left(\frac{x+y}{|x|}\right)^{\beta}-1\right)f_x(y)dy+\displaystyle\int_{\{y+x<0\}}\left(\left(\frac{-x-y}{|x|}\right)^{\beta}-1\right)f_x(y)dy.\end{align}

\textbf{Step 2.} In the second step we  find an appropriate upper
bound for the first summand in (\ref{eq:3.8}).   For any $x>0$ we
have
\begin{align*}\displaystyle\int_{\{y+x>0\}}\left(\left(\frac{x+y}{x}\right)^{\beta}-1\right)f_x(y)dy
=\displaystyle\int_{\{y+x>0\}}\left(\left(1+\frac{y}{x}\right)^{\beta}-1\right)f_x(y)dy.
\end{align*}
 Let $0<\delta<1$ be
arbitrary. By restricting the function $(1+t)^{\beta}-1$ to the
intervals $(-1,-\delta),$ $[-\delta,\delta]$, $(\delta,1)$ and
$[1,\infty)$, and using its  Binomial series, that is,
$$(1+t)^{\beta}-1=\displaystyle\sum_{i=1}^{\infty}{\beta \choose i} t^{i},$$
for $t\in(-1,1)$, where $${\beta \choose
i}=\frac{\beta(\beta-1)\cdots(\beta-i+1)}{i!},$$
 from  Fubini's theorem we get
\begin{align*}&\displaystyle\int_{\{y+x>0\}}\left(\left(\frac{x+y}{x}\right)^{\beta}-1\right)f_x(y)dy\\
&=\displaystyle\sum_{i=1}^{\infty}{\beta \choose
i}\frac{1}{x^{i}}\displaystyle\int_{\{-x<y<-\delta x\}\cap
\{y+x>0\}}y^{i}f_x(y)dy+\displaystyle\int_{\{-\delta x\leq
y\leq\delta x\}\cap
\{y+x>0\}}\left(\left(1+\frac{y}{x}\right)^{\beta}-1\right)f_x(y)dy\\
&\ \ \ +\displaystyle\sum_{i=1}^{\infty}{\beta \choose
i}\frac{1}{x^{i}}\displaystyle\int_{\{\delta x<y<x\}\cap
\{y+x>0\}}y^{i}f_x(y)dy+\displaystyle\int_{\{y\geq x\}\cap
\{y+x>0\}}\left(\left(1+\frac{y}{x}\right)^{\beta}-1\right)f_x(y)dy.\end{align*}
Further,   we have
\begin{align*}&\displaystyle\int_{\{y+x>0\}}\left(\left(\frac{x+y}{x}\right)^{\beta}-1\right)f_x(y)dy\\
&=\displaystyle\sum_{i=1}^{\infty}{\beta \choose
i}\frac{1}{x^{i}}\displaystyle\int_{\{-x<y<-\delta x\}}y^{i}f_x(y)dy+\displaystyle\int_{\{-\delta x\leq y\leq\delta x\}}\left(\left(1+\frac{y}{x}\right)^{\beta}-1\right)f_x(y)dy\\
&\ \ \ +\displaystyle\sum_{i=1}^{\infty}{\beta \choose
i}\frac{1}{x^{i}}\displaystyle\int_{\{\delta x<y<x\}}y^{i}f_x(y)dy
+\displaystyle\int_{\{y\geq
x\}}\left(\left(1+\frac{y}{x}\right)^{\beta}-1\right)f_x(y)dy.\end{align*}
Let us put
\begin{align*}
U^{\delta}_1(x):=&-\frac{\beta}{x}\displaystyle\int_{\{\delta x<y<x\}}yf_x(-y)dy+\frac{\beta}{x}\displaystyle\int_{\{\delta x<y<x\}}yf_x(y)dy\\
U^{\delta}_2(x):=&\displaystyle\sum_{i=2}^{\infty}{\beta \choose
i}\frac{(-1)^{i}}{x^{i}}\displaystyle\int_{\{\delta
x<y<x\}}y^{i}f_x(-y)dy+\displaystyle\sum_{i=2}^{\infty}{\beta
\choose
i}\frac{1}{x^{i}}\displaystyle\int_{\{\delta x<y<x\}}y^{i}f_x(y)dy,\\
U^{\delta}_3(x):=&\int_{\{-\delta x\leq y\leq\delta
x\}}\left(\left(1+\frac{y}{x}\right)^{\beta}-1\right)f_x(y)dy
\quad \textrm{and}\\
U_4(x):=&\displaystyle\int_{\{y\geq
x\}}\left(\left(1+\frac{y}{x}\right)^{\beta}-1\right)f_x(y)dy,\end{align*}
for $0<\delta<1$.  Hence, we find
\be\label{eq:3.9}\int_{\{y+x>0\}}\left(\left(\frac{x+y}{x}\right)^{\beta}-1\right)f_x(y)dy=
U^{\delta}_1(x)+ U^{\delta}_2(x)+U^{\delta}_3(x)+U_4(x).\ee Here
comes the crucial step where   condition (C3) is needed. In the
above terms, by  (C3), we can replace all the density functions
$f_x(y)$ by the functions $c(x)|y|^{-\alpha(x)-1}$  and find a more
operable upper bound in (\ref{eq:3.9}). Let $0<\varepsilon<1$ be
arbitrary. Then, by  (C3), there exists $y_{\varepsilon}\geq 1$,
such that for all $|y|\geq y_{\varepsilon}$
$$\left|f_x(y)\frac{|y|^{\alpha(x)+1}}{c(x)}-1\right|<\varepsilon,$$ for all
$x\in[-k_0,k_0]^{c}$ (recall that the constant $k_0$ is defined in
condition (C3)). Let $x>\left(k_0\vee
y_{\varepsilon}/\delta\right).$
 By  a simple computation,  we have
\begin{align*}U^{\delta}_1(x)<&\frac{2\varepsilon c(x)\beta}{(\alpha(x)-1)x^{\alpha(x)}}\left(\delta^{-\alpha(x)+1}-1\right),
\end{align*}in the case when $\alpha(x)\neq1$, and \begin{align*}U^{\delta}_1(x)<&\frac{2\varepsilon c(x)\beta}{x}\log\left(\frac{1}{\delta}\right),
\end{align*} in the case when $\alpha(x)=1.$ Let us denote the right
hand side in the above inequalities by
$U_1^{\delta,\varepsilon}(x).$ Further, we have
\begin{align*}
U_2^{\delta}(x)<&\frac{c(x)}{x^{\alpha(x)}}\displaystyle\sum_{i=2}^{\infty}{\beta
\choose
i}\frac{1+(-1)^{i}-2\varepsilon(-1)^{i}}{i-\alpha(x)}\left(1-\delta^{i-\alpha(x)}\right)=:U_2^{\delta,\varepsilon}(x)\quad \textrm{and}\\
U_4(x)<&(1+\varepsilon)c(x)\int_{x}^{\infty}\left(\left(1+\frac{y}{x}\right)^{\beta}-1\right)\frac{dy}{y^{\alpha(x)+1}}=:U^{\varepsilon}_4(x).\end{align*}
Hence, from (\ref{eq:3.9}), we get
\be\label{eq:3.10}\int_{\{y+x>0\}}\left(\left(\frac{x+y}{x}\right)^{\beta}-1\right)f_x(y)dy<U^{\delta,\varepsilon}_1(x)+
U^{\delta,\varepsilon}_2(x)+U^{\delta}_3(x)+U^{\varepsilon}_4(x).\ee

\textbf{Step 3.} In the third step we find an appropriate upper
bound for the second summand in (\ref{eq:3.8}).  Let
$x>\left(k_0\vee y_{\varepsilon}/\delta\right).$ Then, again by
(C3),
\begin{align*}&\displaystyle\int_{\{y+x<0\}}\left(\left(\frac{-x-y}{x}\right)^{\beta}-1\right)f_x(y)dy\\&<c(x)(1-\varepsilon)\int_{x}^{2x}\left(\left(-1+\frac{y}{x}\right)^{\beta}-1\right)\frac{dy}{y^{\alpha(x)+1}}+c(x)(1+\varepsilon)\int_{2x}^{\infty}\left(\left(-1+\frac{y}{x}\right)^{\beta}-1\right)\frac{dy}{y^{\alpha(x)+1}}\\
&=c(x)(1-\varepsilon)\int_{x}^{\infty}\left(\left(-1+\frac{y}{x}\right)^{\beta}-1\right)\frac{dy}{y^{\alpha(x)+1}}+2\varepsilon
c(x)\int_{2x}^{\infty}\left(\left(-1+\frac{y}{x}\right)^{\beta}-1\right)\frac{dy}{y^{\alpha(x)+1}}.\end{align*}
Note that in the first inequality we make a change of variables
 $y\longmapsto-y$. Let us put
\begin{align*}U_5^{\varepsilon}(x):=&c(x)(1-\varepsilon)\int_{x}^{\infty}\left(\left(-1+\frac{y}{x}\right)^{\beta}-1\right)\frac{dy}{y^{\alpha(x)+1}}\\&+2\varepsilon
c(x)\int_{2x}^{\infty}\left(\left(-1+\frac{y}{x}\right)^{\beta}-1\right)\frac{dy}{y^{\alpha(x)+1}}.\end{align*}
We have
\be\label{eq:3.11}\displaystyle\int_{\{y+x<0\}}\left(\left(-1+\frac{y}{x}\right)^{\beta}-1\right)f_x(y)dy
<U_5^{\varepsilon}(x).\ee

\textbf{Step 4.} In the fourth step we prove
$$\limsup_{x\longrightarrow\infty}\frac{x^{\alpha(x)-\beta}}{c(x)}\left(\int_{\R}p(x,dy)V(y)-V(x)\right)<0.$$
 By combining
(\ref{eq:3.8}), (\ref{eq:3.9}), (\ref{eq:3.10}) and (\ref{eq:3.11})
we have
\be\label{eq:3.12}x^{-\beta}\left(\int_{\R}p(x,dy)V(y)-V(x)\right)<U^{\delta,\varepsilon}_1(x)+
U^{\delta,\varepsilon}_2(x)+U^{\delta}_3(x)+U^{\varepsilon}_4(x)+U_5^{\varepsilon}(x).\ee
In the rest of the fourth step we prove
\begin{eqnarray*}&&\limsup_{x\longrightarrow\infty}\frac{x^{\alpha(x)-\beta}}{c(x)}\left(\int_{\R}p(x,dy)V(y)-V(x)\right)\\ &&\begin{split}<&\limsup_{\delta\longrightarrow0}\limsup_{\varepsilon\longrightarrow0}\limsup_{x\longrightarrow\infty}\frac{x^{\alpha(x)}}{c(x)}U^{\delta,\varepsilon}_1(x)+\limsup_{\delta\longrightarrow0}\limsup_{\varepsilon\longrightarrow0}\limsup_{x\longrightarrow\infty}\frac{x^{\alpha(x)}}{c(x)}U^{\delta,\varepsilon}_2(x)\\&+\limsup_{\varepsilon\longrightarrow0}\limsup_{x\longrightarrow\infty}\frac{x^{\alpha(x)}}{c(x)}(U_4^{\varepsilon}(x)+U_5^{\varepsilon}(x))+R_2(\alpha,\beta)\leq0.\end{split}\end{eqnarray*}
 Recall
that $0<\alpha=\liminf_{|x|\longrightarrow\infty}\alpha(x)$,
$$R_2(\alpha,\beta)=-\sum_{n=1}^{\infty}{\beta \choose
2n}\frac{2}{2n-\alpha}+\frac{2}{\alpha}-\frac{_2F_1(-\beta,\alpha-\beta,1+\alpha-\beta;-1)+\
_2F_1(-\beta,\alpha-\beta,1+\alpha-\beta;1)}{\alpha-\beta}$$ and
$$\limsup_{\delta\longrightarrow0}\limsup_{x\longrightarrow\infty}\frac{x^{\alpha(x)}}{c(x)}U_3^{\delta}(x)<R_2(\alpha,\beta)$$
(assumption (\ref{eq:1.3})). Clearly,
\begin{align}\label{eq:3.13}&\limsup_{\delta\longrightarrow0}\limsup_{\varepsilon\longrightarrow0}\limsup_{x\longrightarrow\infty}\frac{x^{\alpha(x)}}{c(x)}U^{\delta,\varepsilon}_1(x)=0.\end{align}
  Further, by  the
dominated convergence theorem, we have
\begin{eqnarray}&&\label{eq:3.14}\limsup_{\delta\longrightarrow0}\limsup_{\varepsilon\longrightarrow0}\limsup_{x\longrightarrow\infty}\frac{x^{\alpha(x)}}{c(x)}U^{\delta,\varepsilon}_2(x)\nonumber\\&&\begin{split}=\limsup_{\delta\longrightarrow0}\limsup_{\varepsilon\longrightarrow0}\limsup_{x\longrightarrow\infty}&\displaystyle\sum_{i=2}^{\infty}{\beta
\choose
i}\frac{1+(-1)^{i}-2\varepsilon(-1)^{i}}{i-\alpha(x)}\left(1-\delta^{i-\alpha(x)}\right)\end{split}\nonumber\\
&&\begin{split}=\limsup_{x\longrightarrow\infty}&\displaystyle\sum_{i=1}^{\infty}{\beta
\choose
2i}\frac{2}{2i-\alpha(x)}\end{split}\nonumber\\
&&=\displaystyle\sum_{i=1}^{\infty}{\beta \choose
2i}\frac{2}{2i-\alpha}.\end{eqnarray} Now, let us compute
$$\limsup_{\varepsilon\longrightarrow0}\limsup_{x\longrightarrow\infty}\frac{x^{\alpha(x)}}{c(x)}(U^{\varepsilon}_4(x)+U^{\varepsilon}_5(x)).$$
By (\ref{eq:3.6}), we have
\begin{align}\label{eq:3.15}&\limsup_{\varepsilon\longrightarrow0}\limsup_{x\longrightarrow\infty}\frac{x^{\alpha(x)}}{c(x)}(U^{\varepsilon}_4(x)+U^{\varepsilon}_5(x))\nonumber\\
&=\limsup_{\varepsilon\longrightarrow0}\limsup_{x\longrightarrow\infty}\Bigg[(1+\varepsilon)\left(-\frac{1}{\alpha(x)}+\frac{
_2F_1(-\beta,\alpha(x)-\beta,1+\alpha(x)-\beta;-1)}{\alpha(x)-\beta}\right)\nonumber\\
&\ \ \ +(1-\varepsilon)\left(-\frac{1}{\alpha(x)}+\frac{
_2F_1\left(-\beta,\alpha(x)-\beta,1+\alpha(x)-\beta;1\right)}{\alpha(x)-\beta}\right)\nonumber\\
&\ \ \  +2\varepsilon \left(-\frac{1}{\alpha(x)2^{\alpha(x)}}-\frac{
_2F_1\left(-\beta,\alpha(x)-\beta,1+\alpha(x)-\beta;\frac{1}{2}\right)}{(\beta-\alpha(x))2^{\beta-\alpha(x)}}\right)\Bigg]\nonumber\\
&=\limsup_{x\longrightarrow\infty}\left(-\frac{2}{\alpha(x)}+\frac{_2F_1(-\beta,\alpha(x)-\beta,1+\alpha(x)-\beta;-1)+\ _2F_1(-\beta,\alpha(x)-\beta,1+\alpha(x)-\beta;1)}{\alpha(x)-\beta}\right)\nonumber\\
&=-\frac{2}{\alpha}+\frac{_2F_1(-\beta,\alpha-\beta,1+\alpha-\beta;-1)+\
_2F_1(-\beta,\alpha-\beta,1+\alpha-\beta;1)}{\alpha-\beta},\end{align}
where in the second equality we use (\ref{eq:3.5}) and the fact that
all terms are bounded, and in the last equality we use the fact that
the function
$$x\longmapsto-\frac{2}{x}+\frac{_2F_1(-\beta,x-\beta,1+x-\beta;-1)+\
_2F_1(-\beta,x-\beta,1+x-\beta;1)}{x-\beta}$$ is decreasing on
$(\beta,2)$. Now, by combining (\ref{eq:1.3}), (\ref{eq:3.12}),
(\ref{eq:3.13}), (\ref{eq:3.14}) and (\ref{eq:3.15}) we have
$$\limsup_{x\longrightarrow\infty}\frac{x^{\alpha(x)-\beta}}{c(x)}\left(\int_{\R}p(x,dy)V(y)-V(x)\right)<0.$$
The case when $x<0$ is treated in the same way.  Therefore,
 we have proved the desired result.
\end{proof}

\begin{proof}[Proof of Theorem \ref{tm1.2}]We proceed similarly as in the proof of Theorem \ref{tm1.1}.  The proof is divided  into four steps.

\textbf{Step 1.} In the first step we explain  our strategy of the
proof. Let $\beta\in(0,1)$ be arbitrary and let us define the
function $V:\R\longrightarrow[0,\infty)$ by the formula
$$V(x):=1-(1+|x|)^{-\beta}.$$
Clearly, the sets $C_V(r):=\{y:V(y)\leq r\}$ and $C^{c}_V(r)$ have
positive Lebesgue measure for every $0 < r < 1$. Thus, by
\cite[Theorem 8.4.2]{meyn-tweedie-book}, it suffices to show that
there exists $0 < r_0 < 1$ such that
$$\int_{\R}p(x,dy)V(y)-V(x)\geq0$$ for all $x\in C^{c}_V(r_0)$. Next,
since $C_V(r)\uparrow\R$ for $r\longrightarrow1$, it is enough to
show that
$$\displaystyle\liminf_{|x|\longrightarrow\infty}\frac{(1+|x|)^{\alpha(x) +\beta}}{c(x)}\left(\int_{\R}p(x,dy)V(y)-V(x)\right)>0.$$
We have
\begin{align}&(1+|x|)^{\beta}\left(\displaystyle\int_{\R}p(x,dy)V(y)-V(x)\right)=(1+|x|)^{\beta}\left(\displaystyle\int_{\R}f_x(y)V(y+x)dy-V(x)\right)\nonumber\\
=&\label{eq:3.16}\displaystyle\int_{\{y+x>0\}}\left(1-\left(\frac{1+x+y}{1+|x|}\right)^{-\beta}\right)f_x(y)dy+\displaystyle\int_{\{y+x<0\}}\left(1-\left(\frac{1-x-y}{1+|x|}\right)^{-\beta}\right)f_x(y)dy.\end{align}

\textbf{Step 2.} In the second step we  find an appropriate lower
bound for the first summand in (\ref{eq:3.16}).   For any $x>0$ we
have
\begin{align*}\displaystyle\int_{\{y+x>0\}}\left(1-\left(\frac{1+x+y}{1+x}\right)^{-\beta}\right)f_x(y)dy
=\displaystyle\int_{\{y+x>0\}}\left(1-\left(1+\frac{y}{1+x}\right)^{-\beta}\right)f_x(y)dy.
\end{align*}
 Let $0<\delta<1$ be
arbitrary. By restricting the function $1-(1+t)^{-\beta}$ to the
intervals $(-1,-\delta),$ $[-\delta,\delta]$, $(\delta,1)$ and
$[1,\infty)$, and using its Binomial series, that is,
$$1-(1+t)^{-\beta}=-\displaystyle\sum_{i=1}^{\infty}{-\beta \choose i} t^{i},$$
for $t\in(-1,1)$, from  Fubini's theorem we get
\begin{align*}&\displaystyle\int_{\{y+x>0\}}\left(1-\left(\frac{1+x+y}{1+x}\right)^{-\beta}\right)f_x(y)dy\\
&=-\displaystyle\sum_{i=1}^{\infty}{-\beta \choose
i}\frac{1}{(1+x)^{i}}\displaystyle\int_{\{-1-x<y<-\delta(1+x)\}\cap
\{y+x>0\}}y^{i}f_x(y)dy\\
&\ \ \ +\displaystyle\int_{\{-\delta(1+x)\leq y\leq\delta(1+x)\}\cap
\{y+x>0\}}\left(1-\left(1+\frac{y}{1+x}\right)^{-\beta}\right)f_x(y)dy\\
&\ \ \ -\displaystyle\sum_{i=1}^{\infty}{-\beta \choose
i}\frac{1}{(1+x)^{i}}\displaystyle\int_{\{\delta(1+x)<y<1+x\}\cap
\{y+x>0\}}y^{i}f_x(y)dy\\
&\ \ \ +\displaystyle\int_{\{y\geq1+x\}\cap
\{y+x>0\}}\left(1-\left(1+\frac{y}{1+x}\right)^{-\beta}\right)f_x(y)dy.\end{align*}
Furthermore, by taking $x>\delta/(1-\delta)$  we get
\begin{align*}&\displaystyle\int_{\{y+x>0\}}\left(1-\left(\frac{1+x+y}{1+x}\right)^{-\beta}\right)f_x(y)dy\\
&=-\displaystyle\sum_{i=1}^{\infty}{-\beta \choose
i}\frac{1}{(1+x)^{i}}\displaystyle\int_{\{-x<y<-\delta(1+x)\}}y^{i}f_x(y)dy\\
&\ \ \ +\displaystyle\int_{\{-\delta(1+x)\leq y\leq\delta(1+x)\}}\left(1-\left(1+\frac{y}{1+x}\right)^{-\beta}\right)f_x(y)dy\\
&\ \ \ -\displaystyle\sum_{i=1}^{\infty}{-\beta \choose
i}\frac{1}{(1+x)^{i}}\displaystyle\int_{\{\delta(1+x)<y<1+x\}}y^{i}f_x(y)dy\\
&\ \ \
+\displaystyle\int_{\{y\geq1+x\}}\left(1-\left(1+\frac{y}{1+x}\right)^{-\beta}\right)f_x(y)dy.\end{align*}
Let us put
\begin{align*}
U^{\delta}_1(x):=&-\frac{\beta}{1+x}\displaystyle\int_{\{\delta(1+x)<y<x\}}yf_x(-y)dy+\frac{\beta}{1+x}\displaystyle\int_{\{\delta(1+x)<y<1+x\}}yf_x(y)dy\\
U^{\delta}_2(x):=&-\displaystyle\sum_{i=2}^{\infty}{-\beta \choose
i}\frac{(-1)^{i}}{(1+x)^{i}}\displaystyle\int_{\{\delta(1+x)<y<x\}}y^{i}f_x(-y)dy\\&-\displaystyle\sum_{i=2}^{\infty}{-\beta
\choose
i}\frac{1}{(1+x)^{i}}\displaystyle\int_{\{\delta(1+x)<y<1+x\}}y^{i}f_x(y)dy,\\
U^{\delta}_3(x):=&\int_{\{-\delta(1+x)\leq
y\leq\delta(1+x)\}}\left(1-\left(1+\frac{y}{1+x}\right)^{-\beta}\right)f_x(y)dy
\quad \textrm{and}\\
U_4(x):=&\displaystyle\int_{\{y\geq1+x\}}\left(1-\left(1+\frac{y}{1+x}\right)^{-\beta}\right)f_x(y)dy,\end{align*}
for $0<\delta<1$ and $x>\delta/(1-\delta)$.  Hence, we find
\be\label{eq:3.17}\int_{\{y+x>0\}}\left(1-\left(\frac{1+x+y}{1+x}\right)^{-\beta}\right)f_x(y)dy=
U^{\delta}_1(x)+ U^{\delta}_2(x)+U^{\delta}_3(x)+U_4(x).\ee Now, we
apply (C3) and find  a more operable lower bound in (\ref{eq:3.17}).
Let $0<\varepsilon<1$ be arbitrary. Then, by (C3), there exists
$y_{\varepsilon}\geq 1$, such that for all $|y|\geq y_{\varepsilon}$
$$\left|f_x(y)\frac{|y|^{\alpha(x)+1}}{c(x)}-1\right|<\varepsilon,$$ for all
$x\in[-k_0,k_0]^{c}$ (recall that the constant $k_0$ is defined in
condition (C3)).
 Let
$x>\left(k_0\vee(y_{\varepsilon}-\delta)/\delta\vee\delta/(1-\delta)\right).$
 By  a simple computation,  we have
\begin{align*}U^{\delta}_1(x)>&-\frac{(1+\varepsilon)c(x)\beta}{(\alpha(x)-1)(1+x)^{\alpha(x)}}\left(\delta^{-\alpha(x)+1}-\left(\frac{x}{1+x}\right)^{-\alpha(x)+1}\right)\\&+\frac{(1-\varepsilon)c(x)\beta}{(\alpha(x)-1)(1+x)^{\alpha(x)}}\frac{\delta-\delta^{\alpha(x)}}{\delta^{\alpha(x)}},
\end{align*}in the case when $\alpha(x)\neq1$, and \begin{align*}U^{\delta}_1(x)>&-\frac{(1+\varepsilon)c(x)\beta}{1+x}\log\left(\frac{x}{\delta(1+x)}\right)+\frac{(1-\varepsilon)c(x)\beta}{1+x}\log\left(\frac{1}{\delta}\right),
\end{align*} in the case when $\alpha(x)=1.$ Let us denote the right
hand side in the above inequalities by
$U_1^{\delta,\varepsilon}(x).$ Further, we have
\begin{align*}
U_2^{\delta}(x)>&-\frac{(1+\varepsilon)c(x)}{(1+x)^{\alpha(x)}}\displaystyle\sum_{i=2}^{\infty}{-\beta
\choose
i}\frac{(-1)^{i}}{i-\alpha(x)}\left(\left(\frac{x}{1+x}\right)^{i-\alpha(x)}-\delta^{i-\alpha(x)}\right)\\
&-\frac{c(x)}{(1+x)^{\alpha(x)}}\displaystyle\sum_{i=2}^{\infty}\left({-\beta
\choose
i}\frac{1+(-1)^{i}\varepsilon}{i-\alpha(x)}\frac{\delta^{\alpha(x)}-\delta^{i}}{\delta^{\alpha(x)}}\right)=:U_2^{\delta,\varepsilon}(x)\quad \textrm{and}\\
U_4(x)>&(1-\varepsilon)c(x)\int_{1+x}^{\infty}\left(1-\left(1+\frac{y}{1+x}\right)^{-\beta}\right)\frac{1}{y^{\alpha(x)+1}}dy=:U^{\varepsilon}_4(x).\end{align*}
Hence, from (\ref{eq:3.17}), we get
\be\label{eq:3.18}\int_{\{y+x>0\}}\left(1-\left(\frac{1+x+y}{1+x}\right)^{-\beta}\right)f_x(y)dy>U^{\delta,\varepsilon}_1(x)+
U^{\delta,\varepsilon}_2(x)+U^{\delta}_3(x)+U^{\varepsilon}_4(x).\ee

\textbf{Step 3.} In the third step we find an appropriate lower
bound for the second summand in (\ref{eq:3.16}).  Let
$x>\left(k_0\vee(y_{\varepsilon}-\delta)/\delta\vee\delta/(1-\delta)\right).$
Then,  again by
 (C3), we have
\begin{align*}&\displaystyle\int_{\{y+x<0\}}\left(1-\left(\frac{1-x-y}{1+x}\right)^{-\beta}\right)f_x(y)dy\\&>c(x)(1+\varepsilon)\int_{x}^{2x}\left(1-\left(\frac{1-x+y}{1+x}\right)^{-\beta}\right)\frac{1}{|y|^{\alpha(x)+1}}dy\\&\ \ \ +c(x)(1-\varepsilon)\int_{2x}^{\infty}\left(1-\left(\frac{1-x+y}{1+x}\right)^{-\beta}\right)\frac{1}{|y|^{\alpha(x)+1}}dy\\
&=c(x)(1+\varepsilon)\int_{x}^{\infty}\left(1-\left(\frac{1-x+y}{1+x}\right)^{-\beta}\right)\frac{1}{|y|^{\alpha(x)+1}}dy\\&\
\ \ -2\varepsilon
c(x)\int_{2x}^{\infty}\left(1-\left(\frac{1-x+y}{1+x}\right)^{-\beta}\right)\frac{1}{|y|^{\alpha(x)+1}}dy.\end{align*}
Note that in the first inequality we make a change of variables
 $y\longmapsto-y$. Let us put
\begin{align*}U_5^{\varepsilon}(x):=&c(x)(1+\varepsilon)\int_{x}^{\infty}\left(1-\left(\frac{1-x+y}{1+x}\right)^{-\beta}\right)\frac{1}{|y|^{\alpha(x)+1}}dy\\&-2\varepsilon
c(x)\int_{2x}^{\infty}\left(1-\left(\frac{1-x+y}{1+x}\right)^{-\beta}\right)\frac{1}{|y|^{\alpha(x)+1}}dy.\end{align*}
We have
\be\label{eq:3.19}\displaystyle\int_{\{y+x<0\}}\left(1-\left(\frac{1-x-y}{1+x}\right)^{-\beta}\right)f_x(y)dy
>U_5^{\varepsilon}(x).\ee

\textbf{Step 4.} In the fourth step we prove
$$\liminf_{x\longrightarrow\infty}\frac{(1+x)^{\alpha(x)+\beta}}{c(x)}\left(\int_{\R}p(x,dy)V(y)-V(x)\right)>0.$$
 By combining
(\ref{eq:3.16}), (\ref{eq:3.17}), (\ref{eq:3.18}) and
(\ref{eq:3.19}) we have
\be\label{eq:3.20}(1+x)^{\beta}\left(\int_{\R}p(x,dy)V(y)-V(x)\right)>U^{\delta,\varepsilon}_1(x)+
U^{\delta,\varepsilon}_2(x)+U^{\delta}_3(x)+U^{\varepsilon}_4(x)+U_5^{\varepsilon}(x).\ee
In the rest of the fourth step we prove
\begin{eqnarray*}&&\liminf_{x\longrightarrow\infty}\frac{(1+x)^{\alpha(x)+\beta}}{c(x)}\left(\int_{\R}p(x,dy)V(y)-V(x)\right)\\ &&\begin{split}>&\liminf_{\delta\longrightarrow0}\liminf_{\varepsilon\longrightarrow0}\liminf_{x\longrightarrow\infty}\frac{(1+x)^{\alpha(x)}}{c(x)}U^{\delta,\varepsilon}_1(x)+\liminf_{\delta\longrightarrow0}\liminf_{\varepsilon\longrightarrow0}\liminf_{x\longrightarrow\infty}\frac{(1+x)^{\alpha(x)}}{c(x)}U^{\delta,\varepsilon}_2(x)\\&+\liminf_{\varepsilon\longrightarrow0}\liminf_{x\longrightarrow\infty}\frac{(1+x)^{\alpha(x)}}{c(x)}(U_4^{\varepsilon}(x)+U_5^{\varepsilon}(x))+T(\alpha,\beta)\geq0.\end{split}\end{eqnarray*}
 Recall
that $\limsup_{|x|\longrightarrow\infty}\alpha(x)=:\alpha<2$,
$$T(\alpha,\beta)=\sum_{n=1}^{\infty}{-\beta \choose
2n}\frac{2}{2n-\alpha}-\frac{2}{\alpha}+\frac{_2F_1\left(\beta,\alpha+\beta,1+\alpha+\beta;1\right)+\,
_2F_1\left(\beta,\alpha+\beta,1+\alpha+\beta;-1\right)}{\alpha+\beta}$$
and
$$\liminf_{\delta\longrightarrow0}\liminf_{x\longrightarrow\infty}\frac{(1+x)^{\alpha(x)}}{c(x)}U_3^{\delta}(x)>T(\alpha,\beta)$$
(assumption (\ref{eq:1.4})). Clearly,
\begin{align}\label{eq:3.21}&\liminf_{\delta\longrightarrow0}\liminf_{\varepsilon\longrightarrow0}\liminf_{x\longrightarrow\infty}\frac{(1+x)^{\alpha(x)}}{c(x)}U^{\delta,\varepsilon}_1(x)=0.\end{align}
  Further, by  the
dominated convergence theorem, we have
\begin{eqnarray*}&&\liminf_{\delta\longrightarrow0}\liminf_{\varepsilon\longrightarrow0}\liminf_{x\longrightarrow\infty}\frac{(1+x)^{\alpha(x)}}{c(x)}U^{\delta,\varepsilon}_2(x)\nonumber\\&&\begin{split}=\liminf_{\delta\longrightarrow0}\liminf_{\varepsilon\longrightarrow0}\liminf_{x\longrightarrow\infty}\Bigg[&-(1+\varepsilon)\displaystyle\sum_{i=2}^{\infty}{-\beta
\choose
i}\frac{(-1)^{i}}{i-\alpha(x)}\left(\left(\frac{x}{1+x}\right)^{i-\alpha(x)}-\delta^{i-\alpha(x)}\right)\\
&-\displaystyle\sum_{i=2}^{\infty}\left({-\beta \choose
i}\frac{1+(-1)^{i}\varepsilon}{i-\alpha(x)}\frac{\delta^{\alpha(x)}-\delta^{i}}{\delta^{\alpha(x)}}\right)\Bigg]\end{split}\nonumber\\
&&\begin{split}=\liminf_{\delta\longrightarrow0}\liminf_{\varepsilon\longrightarrow0}\liminf_{x\longrightarrow\infty}\Bigg[&-\displaystyle\sum_{i=2}^{\infty}{-\beta
\choose
i}\frac{(-1)^{i}\left(\frac{x}{1+x}\right)^{i-\alpha(x)}-(-1)^{i}\delta^{i-\alpha(x)}+1-\delta^{i-\alpha(x)}}{i-\alpha(x)}\\
&-\varepsilon\displaystyle\sum_{i=2}^{\infty}{-\beta \choose
i}\frac{(-1)^{i}\left(\frac{x}{1+x}\right)^{i-\alpha(x)}+(-1)^{i}-2(-1)^{i}\delta^{i-\alpha(x)}}{i-\alpha(x)}\Bigg]\end{split}\end{eqnarray*}

\begin{eqnarray}\label{eq:3.22}&&=\liminf_{\delta\longrightarrow0}\liminf_{x\longrightarrow\infty}-\displaystyle\sum_{i=2}^{\infty}{-\beta
\choose
i}\frac{(-1)^{i}\left(\frac{x}{1+x}\right)^{i-\alpha(x)}-(-1)^{i}\delta^{i-\alpha(x)}+1-\delta^{i-\alpha(x)}}{i-\alpha(x)}\nonumber\\
&&=\liminf_{x\longrightarrow\infty}-\displaystyle\sum_{i=2}^{\infty}{-\beta
\choose
i}\frac{(-1)^{i}\left(\frac{x}{1+x}\right)^{i-\alpha(x)}+1}{i-\alpha(x)}\nonumber\\
&&=-\displaystyle\sum_{i=1}^{\infty}{-\beta \choose
2i}\frac{2}{2i-\alpha}.\end{eqnarray} Now, let us compute
$$\liminf_{\varepsilon\longrightarrow0}\liminf_{x\longrightarrow\infty}\frac{(1+x)^{\alpha(x)}}{c(x)}(U^{\varepsilon}_4(x)+U^{\varepsilon}_5(x)).$$
By (\ref{eq:3.6}), we have
\begin{align}\label{eq:3.23}&\liminf_{\varepsilon\longrightarrow0}\liminf_{x\longrightarrow\infty}\frac{(1+x)^{\alpha(x)}}{c(x)}(U^{\varepsilon}_4(x)+U^{\varepsilon}_5(x))\nonumber\\
&=\liminf_{\varepsilon\longrightarrow0}\liminf_{x\longrightarrow\infty}\Bigg[(1-\varepsilon)\left(\frac{1}{\alpha(x)}-\frac{
_2F_1(\beta,\alpha(x)+\beta,1+\alpha(x)+\beta;-1)}{\alpha(x)+\beta}\right)\nonumber\\
&\ \ \
+(1+\varepsilon)\left(\frac{(x+1)^{\alpha(x)}}{\alpha(x)x^{\alpha(x)}}-\frac{(x+1)^{\alpha(x)+\beta}\
_2F_1\left(\beta,\alpha(x)+\beta,1+\alpha(x)+\beta;\frac{x-1}{x}\right)}{x^{\alpha(x)+\beta}(\alpha(x)+\beta)}\right)\nonumber\\
&\ \ \  -2\varepsilon
\left(\frac{(x+1)^{\alpha(x)}}{\alpha(x)(2x)^{\alpha(x)}}-\frac{(x+1)^{\alpha(x)+\beta}\
_2F_1\left(\beta,\alpha(x)+\beta,1+\alpha(x)+\beta;\frac{x-1}{2x}\right)}{x^{\alpha(x)+\beta}(\alpha(x)+\beta)2^{\alpha(x)+\beta}}\right)\Bigg]\nonumber\\
&=\liminf_{x\longrightarrow\infty}\left(\frac{2}{\alpha(x)}-\frac{_2F_1(\beta,\alpha(x)+\beta,1+\alpha(x)+\beta;-1)+\ _2F_1(\beta,\alpha(x)+\beta,1+\alpha(x)+\beta;1)}{\alpha(x)+\beta}\right)\nonumber\\
&=\frac{2}{\alpha}-\frac{_2F_1(\beta,\alpha+\beta,1+\alpha+\beta;-1)+\
_2F_1(\beta,\alpha+\beta,1+\alpha+\beta;1)}{\alpha+\beta},\end{align}
where in the second equality we use (\ref{eq:3.5}) and the fact that
all terms are bounded, and in the last equality we use the fact that
the function
$$x\longmapsto\frac{2}{x}-\frac{_2F_1(\beta,x+\beta,1+x+\beta;-1)+\
_2F_1(\beta,x+\beta,1+x+\beta;1)}{x+\beta}$$ is decreasing on
$(0,2)$. Now, by combining (\ref{eq:1.4}), (\ref{eq:3.20}),
(\ref{eq:3.21}), (\ref{eq:3.22}) and (\ref{eq:3.23}) we have
$$\liminf_{x\longrightarrow\infty}\frac{(1+x)^{\alpha(x)+\beta}}{c(x)}\left(\int_{\R}p(x,dy)V(y)-V(x)\right)>0.$$
The case when $x<0$ is treated in the same way.  Therefore,
 we have proved the desired result.
\end{proof}

\begin{proof}[Proof of Theorem \ref{tm1.3}]
In order to prove the theorem, according to \cite[Theorems 8.4.3 and
13.0.1]{meyn-tweedie-book}, Theorem \ref{tm1.1} and
\cite[Proposition 2.6]{sandric-rectrans}, it is enough to prove that
there exists a Borel measurable function
$V:\R\longrightarrow[0,\infty)$ such that corresponding level sets
$C_V(r):=\{y:V(y)\leq r\}$ are bounded  for every $r>0$ and there
exist $r_0>0$ and $d>0$, such that
$$\int_{\R}p(x,dy)V(y)-V(x)\leq-d$$ for all $x\in C^{c}_V(r_0)$ and
$$\sup\left\{\left|\int_{\R}p(x,dy)V(y)-V(x)\right|:x\in C_V(r_0)\right\}<\infty.$$

By assumption, $0<\inf\{\alpha(x):x\in\R\}$. Let
$\beta\in(0,1]\cap(0,\inf\{\alpha(x):x\in\R\})$ be arbitrary and for
the test function let us again take
$$V(x):=\log(1+|x|)\quad\textrm{and}\quad V(x):=|x|^{\beta}.$$
Since $\beta<\inf\{\alpha(x):x\in\R\}$, it easy to see that
$$\sup\left\{\left|\int_{\R}p(x,dy)V(y)-V(x)\right|:x\in C\right\}<\infty$$ for every
bounded set $C\subseteq\R$. Hence,  since $C_V(r)\uparrow\R$ for
$r\longrightarrow\infty$, it is enough to show that
$$\displaystyle\limsup_{|x|\longrightarrow\infty}\frac{|x|^{\alpha(x)
}}{c(x)}\left(\int_{\R}p(x,dy)V(y)-V(x)+d\right)<0$$ and
$$\displaystyle\limsup_{|x|\longrightarrow\infty}\frac{|x|^{\alpha(x)
-\beta}}{c(x)}\left(\int_{\R}p(x,dy)V(y)-V(x)+d\right)<0,$$ for some
$d>0$, respectively. Now, by performing completely the same
computations as in Theorem \ref{tm1.1} and using (\ref{eq:1.5}) and
(\ref{eq:1.6}), the desired result follows.
\end{proof}

\begin{proof}[Proof of Corollary \ref{c1.4}]
In the case when $\alpha\neq2$, the claim easily follows from
 Theorems \ref{tm1.1}
and \ref{tm1.2} and conditions (\ref{eq:1}) and (\ref{eq:2}), while,
in the case when $\alpha=2$,  the claim follows from
\cite[Proposition 8.5.4 ]{meyn-tweedie-book}.
\end{proof}

\begin{proof}[Proof of Theorem \ref{tm2.4}]
The claim of Theorem \ref{tm2.4} trivially follows from
\cite[Theorem 14.0.1]{meyn-tweedie-book}, (\ref{eq:2.4}),
(\ref{eq:2.5}) and Theorem \ref{tm1.3} by replacing the constant $1$
by an arbitrary Borel measurable  function $g(x)\geq1$ which
satisfies (\ref{eq:2.4}) or (\ref{eq:2.5}).
\end{proof}

\section{Some remarks  on the main results}

\quad \ \ We start this section with  the argumentation of various
versions of conditions (\ref{eq:1.2}), (\ref{eq:1.3}),
(\ref{eq:1.4}), (\ref{eq:1.5}) and (\ref{eq:1.6}) given in Theorems
\ref{tm1.1}, \ref{tm1.2} and \ref{tm1.3}.
 First, we show that under $$\limsup_{|x|\longrightarrow\infty}\alpha(x)<2\quad\textrm{and}\quad \lim_{|x|\longrightarrow\infty}c(x)|x|^{2-\alpha(x)}=\infty$$ conditions (\ref{eq:1.2}) and (\ref{eq:1.3})
are equivalent to (\ref{eq:1}). Indeed, from the elementary
inequalities
$$t-ct^{2}\leq\log(1+t)\leq t\quad \textrm{and}\quad \beta
t-dt^{2}\leq(1+t)^{\beta}-1\leq\beta t,$$ for $|t|$ small enough,
where $c>1/2$ and $d>\beta(1-\beta)/2$ are arbitrary, it follows
\begin{align*}&\mathrm {sgn}\it\,
(x)\frac{|\it{x}|^{\alpha(\it{x})}}{c(\it{x})(\rm{1}+|\it{x}|)}\int_{-\delta|\it{x}|}^{\delta|\it{x}|}yf_{\it{x}}(y)dy-c\frac{|\it{x}|^{\alpha(\it{x})}}{c(\it{x})(\rm{1}+|\it{x}|)^{\rm{2}}}\int_{-\delta|\it{x}|}^{\delta|\it{x}|}y^{\rm{2}}f_{\it{x}}(y)dy\nonumber\\
&\leq\frac{|\it{x}|^{\alpha(\it{x})}}{c(\it{x})}\int_{-\delta|\it{x}|}^{\delta|\it{x}|}\log\left(\rm{1}+\mathrm
{sgn}\it\,
(x)\frac{y}{\rm{1}+\it{|x|}}\right)f_{\it{x}}(y)dy\nonumber\\&\leq\mathrm
{sgn}\it\,
(x)\frac{|\it{x}|^{\alpha(\it{x})}}{c(\it{x})(\rm{1}+|\it{x}|)}\int_{-\delta|\it{x}|}^{\delta|\it{x}|}yf_{\it{x}}(y)dy\end{align*}
and \begin{align*}&\beta\,\mathrm {sgn}\it\,
(x)\frac{|\it{x}|^{\alpha(\it{x})-\rm{1}}}{c(\it{x})}\int_{-\delta|\it{x}|}^{\delta|\it{x}|}yf_{x}(y)dy-d\frac{|\it{x}|^{\alpha(\it{x})-\rm{2}}}{c(\it{x})}\int_{-\delta|\it{x}|}^{\delta|\it{x}|}y^{\rm{2}}f_{x}(y)dy\nonumber\\
&\leq\frac{|\it{x}|^{\alpha(\it{x})}}{c(\it{x})}\int_{-\delta|\it{x}|}^{\delta|\it{x}|}\left(\left(\rm{1}+\mathrm
{sgn}\it\,
(x)\frac{y}{\it{|x|}}\right)^{\beta}-\rm{1}\right)f_{\it{x}}(y)dy\nonumber\\
&\leq\beta\,\mathrm {sgn}\it\,
(x)\frac{|\it{x}|^{\alpha(\it{x})-\rm{1}}}{c(\it{x})}\int_{-\delta|\it{x}|}^{\delta|\it{x}|}yf_{\it{x}}(y)dy,\end{align*}
for $|x|$ large enough. Further, let $\varepsilon>0$ be arbitrary.
Then, by (C3), there exists $y_{\varepsilon}>0$ such that
\begin{align*}&\frac{|\it{x}|^{\alpha(\it{x})-\rm{2}}}{c(\it{x})}\int_{-\delta|\it{x}|}^{\delta|\it{x}|}y^{\rm{2}}f_{\it{x}}(y)dy\\&\leq
\frac{|\it{x}|^{\alpha(\it{x})-\rm{2}}}{c(\it{x})}\int_{-y_{\varepsilon}}^{y_{\varepsilon}}y^{\rm{2}}f_{\it{x}}(y)dy+2(1+\varepsilon)\left(\frac{\delta^{2-\alpha(x)}}{2-\alpha(x)}-\frac{|x|^{\alpha(x)-2}}{2-\alpha(x)}y_{\varepsilon}^{2-\alpha(x)}\right),\end{align*}
for $|x|$ large enough. Now, by taking
$\limsup_{\delta\longrightarrow0}\limsup_{\varepsilon\longrightarrow0}\limsup_{|x|\longrightarrow\infty}$,
it follows that
$$\limsup_{|x|\longrightarrow\infty}\frac{|\it{x}|^{\alpha(\it{x})-\rm{2}}}{c(\it{x})}\int_{-\delta|\it{x}|}^{\delta|\it{x}|}y^{\rm{2}}f_{\it{x}}(y)dy=0.$$
Hence, conditions (\ref{eq:1.2}) and (\ref{eq:1.3}) are equivalent
to
\begin{align}\label{eq:4.1}\limsup_{\delta\longrightarrow0}\limsup_{|x|\longrightarrow\infty}\mathrm
{sgn}(\it{x})\frac{|x|^{\alpha(x)-\rm{1}}}{c(x)}\int_{-\delta|x|}^{\delta|x|}yf_{x}(y)dy<R_{\rm{1}}(\alpha)
\end{align} and
\begin{align}\label{eq:4.2}\limsup_{\delta\longrightarrow0}\limsup_{|x|\longrightarrow\infty}\mathrm
{sgn}(\it{x})\frac{|x|^{\alpha(x)-\rm{1}}}{c(x)}\int_{-\delta|x|}^{\delta|x|}yf_{x}(y)dy<\frac{R_{\rm{2}}(\alpha,\beta)}{\beta},
\end{align} respectively.
Further, it can be proved that the function $\beta\longmapsto
R_2(\alpha,\beta)/\beta$ is strictly decreasing. Thus, in
(\ref{eq:4.2}), we choose $\beta$ close to zero. From
(\ref{eq:3.1}), (\ref{eq:3.2}) and (\ref{eq:3.4}), we have
$$\lim_{\beta\longrightarrow0}\frac{R_2(\alpha,\beta)}{\beta}=R_1(\alpha),$$
which proves the desired result. Next, the assumption
$\liminf_{|x|\longrightarrow\infty}\alpha(x)>1$ implies that the
transition densities $f_x(y)$ have finite first moment for all $|x|$
large enough. Therefore, in order to prove that (\ref{eq:1}) is
equivalent to the following drift condition
\be\limsup_{|x|\longrightarrow\infty}\mathrm{sgn}(\it{x})\frac{|\it{x}|^{\alpha(\it{x})-\rm{1}}}{c(\it{x})}\mathbb{E}[X_{\rm{1}}-X_{\rm{0}}|X_{\rm{0}}=\it{x}]<R_{\rm{1}}(\alpha)\ee
 it suffices to prove that
$$\limsup_{\delta\longrightarrow0}\limsup_{|x|\longrightarrow\infty}\mathrm {sgn}\it\,
(x)\frac{|\it{x}|^{\alpha(\it{x})-\rm{1}}}{c(\it{x})}\left(\int_{-\infty}^{-\delta|\it{x}|}yf_{\it{x}}(y)dy+\int_{\delta|\it{x}|}^{\infty}yf_{\it{x}}(y)dy\right)=\rm{0}.$$
But this fact can again be easily verified by using (C3).

Further, by completely the same arguments as above, it is easy to
check that, under
$$\limsup_{|x|\longrightarrow\infty}\alpha(x)<2\quad\textrm{and}\quad\lim_{|x|\longrightarrow\infty}c(x)|x|^{2-\alpha(x)}=\infty,$$
conditions (\ref{eq:1.5}) and (\ref{eq:1.6}) are equivalent to
(\ref{eq:3}) and (\ref{eq:4}), respectively.  Again, by having
finite first moments, that is, under
$\liminf_{|x|\longrightarrow\infty}\alpha(x)>1$, conditions
(\ref{eq:3}) and (\ref{eq:4}) are equivalent to the following drift
conditions
\be\limsup_{d\longrightarrow0}\limsup_{|x|\longrightarrow\infty}
\frac{|\it{x}|^{\alpha(\it{x})-\rm{1}}}{c(\it{x})}\left(\mathrm
{sgn}(\it{x})\mathbb{E}[X_{\rm{1}}-X_{\rm{0}}|X_{\rm{0}}=\it{x}]+d|\it{x}|\right)<R_{\rm{1}}(\alpha)\ee
and
\be\limsup_{\beta\longrightarrow0}\limsup_{d\longrightarrow0}\limsup_{|x|\longrightarrow\infty}\frac{|x|^{\alpha(x)-\rm{1}}}{c(x)}\left(\mathrm
{sgn}(\it{x})\mathbb{E}[X_{\rm{1}}-X_{\rm{0}}|X_{\rm{0}}=\it{x}]+\frac{d|x|^{-\beta+\rm{1}}}{\beta}\right)<R_{\rm{1}}(\alpha),\ee
respectively.

Finally, it remains to justify various versions of condition
(\ref{eq:1.4}). Under the assumption
$$\lim_{|x|\longrightarrow\infty}c(x)|x|^{2-\alpha(x)}=\infty$$
(recall that $\limsup_{|x|\longrightarrow\infty}\alpha(x)<2$ is
assumed in Theorem \ref{tm1.2}) and from the elementary inequality
$$\beta
t-ct^{2}\leq1-(1+t)^{-\beta}\leq\beta t,$$ for $|t|$ small enough,
where $c>\beta(\beta+1)/2$ is arbitrary, by completely the same
arguments as above, it follows that (\ref{eq:1.4}) is equivalent to
\be\liminf_{\delta\longrightarrow0}\liminf_{|x|\longrightarrow\infty}
\mathrm
{sgn}(\it{x})\frac{|\it{x}|^{\alpha(\it{x})-\rm{1}}}{c(\it{x})}\int_{-\delta|\it{x}|}^{\delta|\it{x}|}yf_{x}(y)dy>\frac{T(\alpha,\beta)}{\beta}.\ee
Further, it can be proved that the function $\beta\longmapsto
T(\alpha,\beta)/\beta$ is strictly increasing. According to this, we
choose $\beta$ close to zero. Again, from (\ref{eq:3.1}),
(\ref{eq:3.2}) and (\ref{eq:3.4}), we have
$$\lim_{\beta\longrightarrow0}\frac{T(\alpha,\beta)}{\beta}=R_1(\alpha),$$
hence (\ref{eq:1.4}) is equivalent to (\ref{eq:2}).  Again, by
assuming $1<\liminf_{|x|\longrightarrow\infty}\alpha(x)$,
(\ref{eq:1.4}) is equivalent to the following drift condition
\be\liminf_{|x|\longrightarrow\infty}\mathrm{sgn}(\it{x})\frac{|\it{x}|^{\alpha(\it{x})-\rm{1}}}{c(\it{x})}\mathbb{E}[X_{\rm{1}}-X_{\rm{0}}|X_{\rm{0}}=\it{x}]>R_{\rm{1}}(\alpha).\ee
 At the end, let us assume that
$\limsup_{|x|\longrightarrow\infty}\alpha(x)<1$. Then, it is not
hard to see that Theorem \ref{tm1.2} holds true under condition
\be\label{eq:4.8}\liminf_{\delta\longrightarrow0}\liminf_{|x|\longrightarrow\infty}\frac{\alpha(x)|x|^{\alpha(x)}}{c(x)}\int_{-\delta|x|}^{\delta|x|}\left(1-\left(1+\mathrm
{sgn}\it\,
(x)\frac{y}{\rm{1}+|\it{x}|}\right)^{-\beta}\right)f_{x}(y)dy>\alpha
T(\alpha,\beta).\ee Further, condition (\ref{eq:4.8}) is equivalent
to
\be\liminf_{a\longrightarrow\infty}\liminf_{|x|\longrightarrow\infty}\frac{\alpha(x)|x|^{\alpha(x)}}{c(x)}\int_{-a}^{a}\left(1-\left(1+\mathrm
{sgn}\it\,
(x)\frac{y}{\rm{1}+|\it{x}|}\right)^{-\beta}\right)f_{x}(y)dy>\alpha
T(\alpha,\beta).\ee Indeed, let $0<\varepsilon<1$ and $0<\delta<1$
be arbitrary. Then, by (C3) and (\ref{eq:3.6}), there exists
$y_{\varepsilon}>0$ such that
\begin{align*}&(\rm{1}-\mathrm {sgn}\it\,
(x)\varepsilon)|\it{x}|^{\alpha(\it{x})}\Bigg[\frac{\rm{1}}{y^{\alpha(x)}_{\varepsilon}}-\frac{\rm{1}}{\delta^{\alpha(x)}|\it{x}|^{\alpha(\it{x})}}+\frac{_{\rm{2}}F_{\rm{1}}\left(-\alpha(x),\beta,\rm{1}-\alpha(\it{x}),-\mathrm
{sgn}\it\,
(x)\frac{\delta|x|}{\rm{1}+|\it{x}|}\right)}{\delta^{\alpha(x)}|\it{x}|^{\alpha(\it{x})}}\\
&\ \ \
-\frac{_{\rm{2}}F_{\rm{1}}\left(-\alpha(\it{x}),\beta,\rm{1}-\alpha(\it{x}),-\mathrm
{sgn}\it\,
(x)\frac{y_{\varepsilon}}{\rm{1}+|\it{x}|}\right)}{y_{\varepsilon}^{\alpha(\it{x})}}\Bigg]+(\rm{1}+\mathrm
{sgn}\it\,
(x)\varepsilon)|\it{x}|^{\alpha(\it{x})}\Bigg[\frac{\rm{1}}{y^{\alpha(x)}_{\varepsilon}}
-\frac{\rm{1}}{\delta^{\alpha(x)}|\it{x}|^{\alpha(\it{x})}}\\
&\ \ \
+\frac{_{\rm{2}}F_{\rm{1}}\left(-\alpha(x),\beta,\rm{1}-\alpha(\it{x}),\mathrm
{sgn}\it\,
(x)\frac{\delta|x|}{\rm{1}+|\it{x}|}\right)}{\delta^{\alpha(x)}|\it{x}|^{\alpha(\it{x})}}-\frac{_{\rm{2}}F_{\rm{1}}\left(-\alpha(\it{x}),\beta,\rm{1}-\alpha(\it{x}),\mathrm
{sgn}\it\,
(x)\frac{y_{\varepsilon}}{\rm{1}+|\it{x}|}\right)}{y_{\varepsilon}^{\alpha(\it{x})}}\Bigg]\\
&\ \ \ +
\frac{|x|^{\alpha(x)}\alpha(x)}{c(x)}\int_{-y_{\varepsilon}}^{y_{\varepsilon}}\left(1-\left(1+\mathrm
{sgn}\it\,
(x)\frac{y}{\rm{1}+|\it{x}|}\right)^{-\beta}\right)f_{x}(y)dy\\
&\leq\frac{|x|^{\alpha(x)}\alpha(x)}{c(x)}\int_{-\delta|x|}^{\delta|x|}\left(1-\left(1+\mathrm
{sgn}\it\,
(x)\frac{y}{\rm{1}+|\it{x}|}\right)^{-\beta}\right)f_{x}(y)dy,\end{align*}
holds for all $|x|$ large enough. In the similar way we get a
similar upper bound for the left-hand side term in (\ref{eq:4.8}).
Now, since $\limsup_{|x|\longrightarrow\infty}\alpha(x)<1$, by
letting
$\liminf_{\delta\longrightarrow0}\liminf_{\varepsilon\longrightarrow0}\liminf_{y_{\varepsilon}\longrightarrow\infty}\liminf_{|x|\longrightarrow\infty}$
and applying (\ref{eq:3.5}), the desired result follows. Further,
from the concavity of the function $x\longmapsto x^{\beta}$ (recall
that  $\beta\in(0,1)$), we have
\begin{align*}&\frac{|x|^{\alpha(x)}\alpha(x)}{c(x)}\int_{-a}^{a}\left(1-\left(1+\mathrm
{sgn}\it \,(x)\frac{y}{\rm1+|\it x|}\right)^{-\beta}\right)f_x(y)dy\\
&\geq\frac{|x|^{\alpha(x)}\alpha(x)}{c(x)}\frac{(1+|x|-a)^{\beta}-(1+|x|)^{\beta}}{(1+|x|-a)^{\beta}}\int_{-a}^{a}f_x(y)dy\\
&\geq-\frac{a\beta\alpha(x)|x|^{\alpha(x)}}{c(x)(1+|x|-a)}.\end{align*}
Similarly,
$$\frac{|x|^{\alpha(x)}\alpha(x)}{c(x)}\int_{-a}^{a}\left(1-\left(1+\mathrm
{sgn}\it \,(x)\frac{y}{\rm1+|\it
x|}\right)^{-\beta}\right)f_x(y)dy\leq\frac{a\beta\alpha(x)|x|^{\alpha(x)}(1+|x|)^{\beta-1}}{c(x)(1+a+|x|)^{\beta}}.$$
Thus, by  taking
$0<\beta<1-\alpha=1-\limsup_{|x|\longrightarrow\infty}\alpha(x)$ and
letting
$\liminf_{a\longrightarrow\infty}\liminf_{|x|\longrightarrow\infty}$,
we get that condition  (\ref{eq:4.8}) follows from
\be\label{eq:4.10}\lim_{|x|\longrightarrow\infty}\frac{\alpha(x)|x|^{\alpha(x)-1}}{c(x)}=0\ee
(note that in this case $T(\alpha,\beta)<0$).  Essentially,
condition (\ref{eq:4.10}) says that the scaling function $c(x)$
cannot decrease too fast (recall that $\sup\{c(x):x\in\R\}<\infty$
(see \cite{sandric-rectrans})). Otherwise, we could enter in the
recurrence regime.

At the end, note that all conclusions, methods and proofs given in
this paper  can also be carried out in the discrete state space
$\ZZ$. Note that in this case conditions (C1)-(C5) are reduced just
to conditions  (C2) and (C3), since compact sets are replaced by
finite sets. Therefore, we deal with a Markov chain
$\{X^{d}_n\}_{n\geq0}$ on $\ZZ$ given by the transition kernel
$$p_{ij}:=f_i(j-i),$$ for $i,j\in\ZZ$, where $\{f_i:i\in\ZZ\}$ is a
family of probability functions which satisfies the following
conditions:
\begin{description}
\item [\textbf{(CD1)}] $f_i(j)\sim c(i)|j|^{-\alpha(i)-1},$
                          for
                          $|j|\longrightarrow\infty$, for every $i\in\ZZ$;
                          \item [\textbf{(CD2)}] there exists $k_0\in\N$ such that
\begin{align*}\lim_{|j|\longrightarrow\infty}\sup_{i\in\{-k_0,\ldots,k_0\}^{c}}\left|f_i(j)\frac{|j|^{\alpha(i)+1}}{c(i)}-1\right|=0.\end{align*}
\end{description}
Functions $\alpha:\ZZ\longrightarrow(0,2)$ and
$c:\ZZ\longrightarrow(0,\infty)$ are arbitrary given functions. The
proofs and assumptions of Theorems~\ref{tm1.1}, \ref{tm1.2},
\ref{tm1.3}, \ref{tm2.2}  and \ref{tm2.4} in the discrete case remain the same as
in the continuous case because we can switch from sums to integrals
due to the tail behavior of transition jumps.

 \section*{Acknowledgement} This work has been supported in part by Croatian Science Foundation under the project 3526. The author would like to thank    the anonymous reviewer for careful reading of the paper
and for helpful comments that led to improvement of the
presentation.

\bibliographystyle{alpha}
\bibliography{References}

\end{document}